\newif\ifpdf
\numberwithin{equation}{section}
\newcommand{\R}{\mathbb{R}}
\renewcommand{\div}{\operatorname{div}}
\newcommand{\tir}[1]{\ensuremath{\overline {#1}}} 
\newtheorem{thm}{Theorem}[section] 
\newtheorem{lemma}[thm]{Lemma}
\newtheorem{defn}[thm]{Definition} 
\newtheorem{rem}[thm]{Remark}
\def\whsq{\vbox to 5.8pt 
{\offinterlineskip\hrule 
\hbox to 5.8pt{\vrule height 
5.1pt\hss\vrule height 5.1pt}\hrule}}
\def\<{\langle} 
\def\>{\rangle} 
\def\PP{{\mathop{{\rm I}\kern-.2em{\rm P}}\nolimits}} 
\def\FF{{\mathop{{\rm I}\kern-.2em{\rm F}}\nolimits}}   
\def\ZZ{{\mathop{{\rm I}\kern-.2em{\rm Z}}\nolimits}} 
\newlength{\sidemargin} 
\begin{document}
\title[]{
Iterative methods for $k$-Hessian equations
}

\author{Gerard Awanou}
\address{Department of Mathematics, Statistics, and Computer Science, M/C 249. University of Illinois at Chicago, Chicago, IL 60607-7045, USA}

\maketitle

\begin{abstract}
On a domain of the $n$-dimensional Euclidean space, and for an integer $k=1,\ldots,n$, the $k$-Hessian equations are fully nonlinear elliptic equations for $k >1$ and consist of the Poisson equation for $k=1$ and the Monge-Amp\`ere equation for $k=n$. We analyze for smooth non degenerate solutions a 9-point finite difference scheme. We prove that the discrete scheme has a locally unique solution with a quadratic convergence rate.
In addition we propose new iterative methods which are numerically shown to work for non smooth solutions. A connection of the latter with a popular Gauss-Seidel method for the Monge-Amp\`ere equation is established and new Gauss-Seidel type iterative methods for $2$-Hessian equations are introduced. 

\end{abstract}

\section{Introduction}

Let $\Omega$ be a bounded, connected open subset of $\R^n, n\geq 2$ with boundary denoted $\partial \Omega$. 
Let $u \in C^2(\Omega)$ and for $x \in \Omega$, let $D^2 u(x)=\bigg( (\partial^2 u(x))(\partial x_i \partial x_j)\bigg)_{i,j=1,\ldots, n}  $ 
denote its Hessian. We denote the eigenvalues of $D^2 u(x)$ by $\lambda_i(x), i=1,\ldots, n$. 
For $1 \leq k \leq n$, the $k$-Hessian operator is defined as
\begin{align*}
S_k(D^2 u) = \sum_{i_1 < \cdots < i_k} \lambda_{i_1}  \cdots \lambda_{i_k}.
\end{align*}
 We note that $S_1(D^2 u) = \Delta u$ is the Laplacian operator and $S_n(D^2 u) = \det D^2 u$ is the Monge-Amp\`ere operator. For $k \geq 2$, we are interested in the numerical approximation of 
 solutions of the Dirichlet problem for the $k$-Hessian equation 
\begin{equation}
S_k(D^2 u) = f \, \text{in} \, \Omega, u=g  \, \text{on} \, \partial \Omega, \label{k-H1}
\end{equation}
with $f$ and $g$ given and $f \geq 0$. 

\subsection{Local existence, uniqueness and quadratic convergence rate for a finite difference discretization}
 Let $u^0$ be a sufficiently close initial guess to the smooth solution $u$ of \eqref{k-H1}. Consider the iterative method
 \begin{align} \label{broyden}
\begin{split}
\div \bigg( \{S_k^{ij}(D^2 u^0) \} D u^{m+1} \bigg)& = \div \bigg( \{S_k^{ij}(D^2 u^0) \} D u^{m} \bigg) +f-S_k (D^2 u^m) \, \text{in} \,  \Omega  \\
 u^{m+1} & = g \, \text{on} \, \partial \Omega, 
 \end{split}
\end{align}
where $ \{S_k^{ij}(D^2 u^0) \}$ is a matrix which generalizes the cofactor matrix of $D^2 u^0$. 

We prove the convergence of \eqref{broyden} at the continuous level in H$\ddot{\text{o}}$lder spaces. A discrete version of \eqref{broyden} is also shown to converge to a solution of a 9-point stencil discretization of \eqref{k-H1}. This establishes the local existence and uniqueness of a discrete solution. In addition the convergence rate of the discretization is shown to be quadratic.


It is reasonable to expect that the discrete version of the iterative method \eqref{broyden} will retrieve the correct solution when it is smooth and non degenerate. As with Newton's method it is not effective for non smooth  and degenerate solutions. For these, we advocate iterative methods like the subharmonicity preserving iterations described below. The discrete version of \eqref{broyden} is used in this paper to prove the local solvability of the 9-point scheme when $u$ is smooth and non degenerate. These results form a building block of a theory which explains why standard discretizations work for non smooth solutions \cite{Awanou-Std-fd-jsc}. In addition results for smooth solutions are also needed for the analysis of hybrid schemes where the 9 point scheme is used in part of the region where the solution is smooth and a monotone scheme elsewhere \cite{AwanouHybrid}.

\subsection{Newton's method}
If one is only interested in smooth solutions, Newton's method is the most appropriate method. 
We analyze the convergence of Newton's method for solving \eqref{k-H1} when it has a smooth solution.

\subsection{Numerical work for subharmonicity preserving iterations}

A smooth function $u$ is said to be $k$-convex if $S_l (D^2 u) \geq 0, 1 \leq l \leq k$. Convexity of a function can be shown to be equivalent to $n$-convexity, Lemma \ref{n-convexity}. 
It is of interest in some applications to be able to handle \eqref{k-H1} when it has a non smooth $k$-convex solution. It has only been recently understood, c.f. \cite{Awanou-Std-fd-jsc} for the Monge-Amp\`ere equation, that what is needed is a numerical method provably convergent for smooth solutions and numerically robust to handle non smooth solutions. The approach in \cite{Awanou-Std-fd-jsc} is to regularize the data and use approximation by smooth functions. The key to numerically handle non smooth solutions of \eqref{k-H1} is to preserve $k$-convexity in the iterations.  For discrete $k$-convexity we simply require discrete analogues of the condition $S_l (D^2 u) \geq 0$ with a natural discretization of $D^2 u$. We refer to \cite{Aguilera2008} where this approach was first used for the discretization of $n$-convexity.
 
Consider the iterative method
\begin{align}
\begin{split}
 \Delta u^{m+1} & = \bigg( (\Delta u^{m})^k + \frac{1}{c(k,n)}(f-S_k (D^2 u^m)) \bigg)^{\frac{1}{k}} \, \text{in} \,  \Omega, 
  u^{m+1} = g \, \text{on} \, \partial \Omega, \label{k-H-iterative}
  \end{split}
\end{align}
with 
$c(k,n) = \binom{n}{k}/n^k$.

If $D^2 u$ has positive eigenvalues, we have the inequality
\begin{equation} \label{G-am}
S_k(D^2 u) \leq c(k,n) (\Delta u)^{k}, 
\end{equation}
which follows from the Maclaurin inequalities, 
\cite[Proposition 1.1 (v i)]{Gavitone2009}.

For $k=2$, \eqref{G-am} also holds with {\it no convexity assumption} on $u$, \cite[Lemma 15.11]{Lieberman96}. Explicitly 
$c(2,3)=1/3$. Also, $c(n,n)=1/n^n$ which gives
$$
\det D^2 u \leq \frac{1}{n^n} (\Delta u)^n, 
$$
a direct consequence of the arithmetic mean - geometric mean inequality.

If one starts with an initial guess $u^0$ such that $\Delta u^0 \geq 0$, \eqref{k-H-iterative} enforces $\Delta u^m \geq 0$ for all $m$. Indeed recall that $f \geq 0$ and assume that   $\Delta u^m \geq 0$. Then by \eqref{G-am}
$1/c(k,n) S_k (D^2 u^m) \leq (\Delta u^{m})^k$, and using \eqref{k-H-iterative} it follows that $(\Delta u^{m+1})^k \geq 0$. In other words, starting with an initial guess $u^0$ with $\Delta u^0 \geq 0$, \eqref{k-H-iterative} enforces subharmonicity 
in arbitrary dimension for smooth convex solutions and subharmonicity for 2-Hessian equations with no convexity assumption on $u$. In addition for 2-Hessian equations, the limit solution solves $S_2(D^2 u)=f \geq 0$. That is, the sequence $u^{m+1}$ defined by \eqref{k-H-iterative} has a formal limit which solves $\Delta u \geq 0$ and $S_2(D^2 u)\geq 0$. Thus \eqref{k-H-iterative} enforces 2-convexity in arbitrary dimension for 2-Hessian equations. 


 Another class of iterative methods we introduce 
 in this paper are Gauss-Seidel type iterative methods. 
 The Gauss-Seidel methods are more efficient than \eqref{k-H-iterative} for large scale problems.
 
 The simplicity of the methods discussed in this paper and the facility with which they can be implemented, make them attractive to researchers interested in Monge-Amp\`ere equations. The other major motivation to study the subharmonicity preserving iterations is that they can be adapted to the finite element context and have been numerically shown in that context to be robust for non smooth solutions.
 
 

In two dimension, \eqref{k-H-iterative} appears to perform well in the degenerate case $f \geq 0$ as discrete $k$-convexity is enforced in the iterations. The situation is different in three dimension with $k=2$.  
We were not able to reproduce the solution $u(x,y,z)=|x-1/2|$ by solving \eqref{k-H1} with $k=2$ and using \eqref{k-H-iterative}. Here, since $u$ does not depend on $z$, we have $f(x,y,z)=0$ as in the two dimensional case.
However, for $n=3$ and $k=3$, we can preserve convexity in the degenerate case by using the sequence of nonlinear $2$-Hessian equations
\begin{align} \label{sigma2k}
S_2(D^2 u^{m+1}) = 3 \bigg(\bigg(\frac{1}{3} S_2 (D^2 u^m)\bigg)^{\frac{3}{2}} + f - \det D^2 u^m
\bigg)^{\frac{2}{3}},
\end{align}
with $u^{m+1}=g$ on $\partial \Omega$. Each of these equations is solved iteratively by \eqref{k-H-iterative} with $k=2, n=3$. 
We note that $\bigg(\frac{1}{3} S_2 (D^2 u^m)\bigg)^{\frac{3}{2}} - \det D^2 u^m \geq 0$ when $S_2 (D^2 u^m)>0$, \cite[Lemma 15.12]{Lieberman96}. Starting with an initial guess which satisfies $S_2 (D^2 u^0) >0$ and setting $\det D^2 u^m=0$ in \eqref{sigma2k} whenever $S_2 (D^2 u^m)=0$, we obtain a double sequence iterative method which at the limit enforce $\Delta u \geq 0, S_2 (D^2 u) \geq 0$, and $ \det D^2 u = f \geq 0$. 

The reason for setting $\det D^2 u^m=0$ in \eqref{sigma2k} whenever $S_2 (D^2 u^m)=0$ is motivated by the observation that in the case $f=0$, if $S_2 (D^2 u^m)=0$, $S_2 (D^2 u^{m+1})$ is ill-defined or complex valued if $\det D^2 u^m>0$. While \eqref{k-H-iterative} may be inexact for degenerate 2-Hessian equations, its use inside a double iterative method appears effective. This is reminiscent of inexact Uzawa algorithms. 


\subsection{Relation with other work}
The $k$-Hessian equations have mainly applications in conformal geometry and physics. The Monge-Amp\`ere operator has received recently a lot of interest from numerical analysts. For $n=3$ and $k=2$, 
the numerical resolution of \eqref{k-H1} has been considered in \cite{Sorensen10}, where it was referred to as the $\sigma_2$ problem. 
The iterative method \eqref{k-H-iterative} generalizes an iterative method introduced in \cite{Benamou2010} for the two dimensional Monge-Amp\`ere equation. The latter corresponds to the choice $ k=n=2$ and the constant $c(2,2)=1/4$ replaced by 1/2. The $2$-Hessian equation has also been considered recently in \cite{FroeseObermanSalvago} from the point of view of monotone schemes.

 We will see that if the central finite difference discretization of \eqref{k-H-iterative} is solved by a Gauss-Seidel iterative method, one recovers a Gauss-Seidel iterative method which has been used by many authors to solve the two dimensional Monge-Amp\`ere equation. We will refer to the latter method as the 2D Gauss-Seidel method for Monge-Amp\`ere equation. It has been used in the numerical simulation of Ricci flow \cite{Headrick05}, as a smoother in multigrid methods for the balance vortex model in meteorology, \cite{Chen2010b,Chen2010c} and has been recently shown numerically to capture the viscosity solution of the 2D Monge-Amp\`ere equation \cite{Benamou2010}. The connection between \eqref{k-H-iterative} and the 2D Gauss-Seidel method for the Monge-Amp\`ere equation is what enables us to introduce new Gauss-Seidel type iterative methods for $k$-Hessian equations. 
 
 The ingredients of our proof of the convergence rate for the finite difference discretization are discrete Schauder estimates and a suitable generalization of the combined fixed point iterative method used in \cite{Feng2009}.
 Schauder estimates were also used in the proof of convergence of Newton's method at the continuous level \cite{Loeper2005}.

\subsection{Organization of the paper}
The paper is organized as follows: In the next section, we 
give some notations, recall the Schauder estimates and their discrete analogues. In section \ref{elliptic} we prove our main results on the quadratic convergence rate of a finite difference discretization of \eqref{k-H1} and in section \ref{newton-sec} we prove the convergence of Newton's method. In section \ref{convexity} we 
introduce new Gauss-Seidel type iterative methods and their connections with the subharmonicity preserving iterations \eqref{k-H-iterative}.
Section \ref{num} is devoted to numerical results. We conclude with some remarks. 
The reader interested only in the Monge-Amp\`ere equation, or for a first reading, may assume that $k=n$.

\section{Notation and preliminaries} \label{notation}

\subsection{H$\ddot{\text{o}}$lder spaces and Schauder estimates} \label{notation1}
For a nonnegative integer $r$ or for $r=\infty$, we denote by $C^r(\Omega)$ the set of all functions having all derivatives of order $\leq r$ continuous on $\Omega$ 
 and by $C^r(\tir{\Omega})$, the set of all functions in $C^r(\Omega)$
whose derivatives of order $\leq r$  have continuous
extensions to $\tir{\Omega}$. For a multi-index $\beta=(\beta_1,\ldots,\beta_n) \in \mathbb{N}^n$, put $|\beta|=\beta_1+\ldots+\beta_n$. We use the notation $D^{\beta} u(x)$ for the partial derivative
$(\partial /\partial x_1)^{\beta_1} \ldots (\partial /\partial x_n)^{\beta_n} u(x)$.

The norm in $C^r(\Omega)$ is given by
$$
||u||_{r;\Omega} = \sum_{j=0}^r \, |u|_{j;\Omega}, \quad |u|_{j;\Omega} = \text{sup}_{|\beta|=j} \text{sup}_{\Omega} |D^{\beta}u(x)|.
$$
We denote by $|x|$ the Euclidean norm of $x \in \R^n$. A function $u$ is said to be uniformly H$\ddot{\text{o}}$lder continuous with exponent $\alpha, 0 <\alpha \leq 1$ in $\Omega$ if
the quantity
$$
 \text{sup}_{x \neq y} \frac{|u(x)-u(y)|}{|x-y|^{\alpha}},
$$
is finite. 
The space $C^{r,\alpha}(\tir{\Omega})$
consists of functions whose $r$-th order derivatives are uniformly H$\ddot{\text{o}}$lder
continuous with exponent $\alpha$ in $\Omega$. It is a Banach space with norm
$$
||u||_{r,\alpha;\Omega} = ||u||_{r;\Omega} + [u]_{r,\alpha;\Omega},
$$
where
$$
[u]_{r,\alpha;\Omega}  =  \text{sup}_{|\beta|=r} \text{sup}_{x \neq y} \frac{|D^{\beta}u(x)-
D^{\beta} u(y)|}{|x-y|^{\alpha}}. 
$$
The norms $|| \, ||_{r;\Omega}$ and $|| \, ||_{r,\alpha;\Omega}$ are naturally extended to vector fields and matrix fields by taking the supremum over all components. We make the standard convention of using $C$ for a generic constant. For $A=(a_{ij})_{i,j=1,\ldots,n}$ and $B=(b_{ij})_{i,j=1,\ldots,n}$ 
we recall that 
$A:B=\sum_{i,j=1}^n a_{i j} b_{ij}$. 
We will often use the following property
\begin{equation} \label{alpha-prod1}
|| f g ||_{0,\alpha;\Omega} \leq C || f  ||_{0,\alpha;\Omega}  ||  g ||_{0,\alpha;\Omega}, \, \text{for} \, f,g \in C^{0,\alpha}(\tir{\Omega}),
\end{equation}
from which it follows that if $A, B$ are matrix fields 
\begin{equation}  \label{alpha-prod2}
||A:B||_{0,\alpha;\Omega} \leq C \sum_{i,j=1}^n || a_{ij} ||_{0,\alpha;\Omega} || b_{ij} ||_{0,\alpha;\Omega}.
\end{equation}

We first state a global regularity result for the solution of strictly elliptic equations, which follows from  \cite[Theorems 6.14, 6.6 and Corollary  3.8 ]{Gilbarg2001}.
\begin{thm} \label{SchauderPoisson}
Assume $0< \alpha < 1$. Let $\Omega$ be a $C^{2,\alpha}$ domain in $\R^n$ and $f, a^{ij} \in C^{\alpha}(\tir{\Omega})$, $\phi \in C^{2,\alpha}(\tir{\Omega})$. We consider the strictly elliptic operator
\begin{equation} \label{st-elliptic}
L u = \sum_{i,j=1}^n a^{ij}(x) \frac{\partial^2}{\partial x_i \partial x_j} u(x),
\end{equation}
with coefficients satisfying for positive constants $\lambda, \Lambda$, 
$$
\sum_{i,j=1}^na^{ij}(x) \zeta_i \zeta_j \geq \lambda \sum_{l=1}^n   \zeta_l^2, \zeta_l \in \R, \, \text{and} \, |a^{i,j}|_{0,\alpha;\Omega} \leq \Lambda.
$$
Then the solution $u$
of the equation
$$
L u =f \, \text{in} \, \Omega, u = \phi \, \text{on} \, \partial \Omega,
$$
satisfies
$$
||u||_{2,\alpha;\Omega} \leq C(||\phi||_{2,\alpha;\Omega}+ ||f||_{0,\alpha;\Omega}),
$$
where $C$ depends on $n, \alpha, \lambda, \Lambda, \Omega, \sup_{\partial \Omega} |\phi|$, and $\sup_{\Omega} |f|/\lambda$.
\smallskip
\end{thm}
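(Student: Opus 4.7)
The statement is essentially a packaging of three standard results from Gilbarg--Trudinger, so the plan is to assemble them rather than rework the Schauder theory from scratch. First I would invoke Theorem 6.14 of \cite{Gilbarg2001}, which, under the hypotheses that $\Omega$ is a $C^{2,\alpha}$ domain, $\phi \in C^{2,\alpha}(\tir\Omega)$, $f \in C^\alpha(\tir\Omega)$, and $L$ is strictly elliptic with $C^\alpha$ coefficients (possibly after extending $\phi$ into $\Omega$), produces a unique classical solution $u \in C^{2,\alpha}(\tir\Omega)$ to $Lu = f$ in $\Omega$ with $u = \phi$ on $\partial \Omega$. This establishes existence and regularity; the remaining task is the quantitative estimate.

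Next I would apply the global Schauder estimate, Theorem 6.6 of \cite{Gilbarg2001}, to the solution $u$. This yields
$$
\|u\|_{2,\alpha;\Omega} \leq C\bigl(\|u\|_{0;\Omega} + \|\phi\|_{2,\alpha;\Omega} + \|f\|_{0,\alpha;\Omega}\bigr),
$$
where $C$ depends only on $n$, $\alpha$, $\lambda$, $\Lambda$, and the $C^{2,\alpha}$ character of $\partial\Omega$. Note that Theorem 6.6 is stated for solutions of $Lu=f$ with the operator $L$ here having no lower-order terms, so the coefficient hypotheses are exactly those assumed.

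To close the argument, I would remove the $\|u\|_{0;\Omega}$ term on the right-hand side using Corollary 3.8 (the $L^\infty$ estimate coming from the weak maximum principle applied to $u - \phi$). Since $L$ has no zeroth-order coefficient and $u = \phi$ on $\partial \Omega$, that corollary gives
$$
\|u\|_{0;\Omega} \leq \|\phi\|_{0;\partial\Omega} + C'\|f\|_{0;\Omega} \leq \|\phi\|_{2,\alpha;\Omega} + C'\|f\|_{0,\alpha;\Omega},
$$
with $C'$ depending on $\lambda$ and $\mathrm{diam}\,\Omega$. Substituting this into the Schauder estimate and absorbing constants delivers the desired bound.

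The only subtle point, which is not really an obstacle but worth checking carefully, is matching the formulations: Theorem 6.6 in \cite{Gilbarg2001} is sometimes stated for operators containing lower-order terms and for interior estimates, so one must use the version that produces a genuine global $C^{2,\alpha}$ bound up to the boundary, which is where the $C^{2,\alpha}$ regularity of $\partial\Omega$ and of $\phi$ enter. Once that version is invoked, the three cited results chain together directly and no further calculation is needed.
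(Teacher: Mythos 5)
Your proposal reproduces exactly the paper's intended justification: the paper simply cites Theorems 6.14, 6.6 and Corollary 3.8 of \cite{Gilbarg2001}, and you have assembled those three results in the same order and for the same purposes (existence, global Schauder bound, removal of the $\|u\|_{0;\Omega}$ term via the maximum principle). The argument is correct and matches the paper's approach.
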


We will make the slight abuse of language of also denoting by $S_k(x), x=(x_1,\ldots,x_n)$ the $k$th elementary symmetric polynomial of the variable 
$x$, i.e.
$$
S_k(\lambda) = \sum_{i_1 < \cdots < i_k} \lambda_{i_1}  \cdots \lambda_{i_k}.
$$
A function $u \in C^2(\Omega) \cap C^0(\tir{\Omega})$ with Hessian $D^2 u$ having eigenvalues $\lambda_i, i=1,\ldots,n$ is said to be $k$-admissible if
$S_j(\lambda) > 0, j=1,\ldots,k$. Solutions of the $k$-Hessian equation will be required to be $k$-admissible, thus requiring $f>0$.

Moreover, let $\kappa=(\kappa_1,\ldots,\kappa_{n-1})$ denote the principal curvatures of $\partial \Omega$. 

\begin{defn} \label{k-convexity-domain}
The domain $\Omega$ is said to be $(k-1)$-convex if there exists $c_0 >0$ such that
$$
S_{k-1}(\kappa) \geq c_0 >0 \, \text{on} \, \partial \Omega.
$$
\end{defn}
We then have, (\cite[Theorems 3.3 and 3.4 ]{WangXJ09}) 
\begin{thm} \label{k-Hessian}
Assume that $\Omega$ is $(k-1)$-convex, $\partial \Omega \in C^{3,1}$, $f \in C^{1,1}(\tir{\Omega})$, inf $f >0$, $g \in C^{3,1}(\tir{\Omega})$. Then there is a unique $k$-admissible solution $u \in C^{3,1}(\tir{\Omega})$ to the Dirichlet problem \eqref{k-H1}. 
\smallskip
\end{thm}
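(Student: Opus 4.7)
The plan is to follow the standard continuity method in the cone of $k$-admissible functions, which is the classical route for fully nonlinear elliptic Dirichlet problems of this type. Parameterize a family of equations $S_k(D^2 u^t) = t f + (1-t) S_k(D^2 h)$ in $\Omega$, $u^t = g$ on $\partial \Omega$, where $h$ is a smooth $k$-admissible function agreeing with $g$ on $\partial\Omega$ (constructed from the $(k-1)$-convexity of $\Omega$), so that $t=0$ is trivially solvable. Define $\mathcal{I}=\{t \in [0,1] : \text{the problem is solvable in } C^{3,\alpha}(\tir{\Omega}) \text{ with a $k$-admissible solution}\}$; the goal is to show $\mathcal{I}$ is nonempty, open, and closed, hence $\mathcal{I}=[0,1]$.

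For openness, I would linearize at a $k$-admissible solution. A direct computation shows that the Fr\'echet derivative of $u \mapsto S_k(D^2 u)$ at $u$ is $L v = \sum_{i,j} S_k^{ij}(D^2 u) \partial_{ij} v$, where $\{S_k^{ij}\}$ is (up to a factor) the cofactor-type matrix described in \eqref{broyden}. Because $u$ is $k$-admissible, Garding's theory of hyperbolic polynomials (equivalently, the results in \cite[Prop.~1.1]{Gavitone2009} quoted above) implies that $\{S_k^{ij}(D^2 u)\}$ is symmetric positive definite, so $L$ is a strictly elliptic linear operator with $C^{1,\alpha}$ coefficients. Theorem \ref{SchauderPoisson} then gives an isomorphism from $C^{3,\alpha}$ onto its image, so the implicit function theorem yields solvability in a neighborhood of any $t_0 \in \mathcal{I}$.

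For closedness, I would establish a priori bounds $\|u^t\|_{2,\alpha;\Omega} \leq C$ with $C$ independent of $t$, and a uniform strict lower bound on $S_1(D^2 u^t), \ldots, S_{k-1}(D^2 u^t)$ that keeps the iterate strictly inside the admissible cone. The $C^0$ and $C^1$ bounds come from a comparison argument against $h$ and against a suitable barrier. The $C^2$ interior bound follows from the concavity of $S_k^{1/k}$ on the admissible cone via a Pogorelov-type second derivative estimate. The $C^2$ boundary estimate is the heart of the matter: tangential-tangential second derivatives are read off from $g$, mixed tangential-normal derivatives use a barrier built from $\operatorname{dist}(\cdot,\partial\Omega)$ and the $(k-1)$-convexity assumption $S_j(\kappa) \geq c_0$ (this is where the geometric hypothesis on $\Omega$ is indispensable), and the double-normal derivative is recovered from the equation together with a strict lower bound on $S_{k-1}$ restricted to the tangent hyperplane. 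Once $\|D^2 u^t\|_{C^0} \le C$ is known and the equation is uniformly elliptic along the path, the Evans--Krylov theorem upgrades this to a uniform $C^{2,\alpha}$ estimate, and then linearizing and applying Theorem \ref{SchauderPoisson} bootstraps to $C^{3,\alpha}$, with $C^{3,1}$ following from the $C^{1,1}$ regularity of $f$.

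Uniqueness is easy: if $u_1, u_2$ are two $k$-admissible solutions, writing $S_k(D^2 u_1) - S_k(D^2 u_2) = \int_0^1 \frac{d}{ds} S_k(D^2(s u_1 + (1-s) u_2)) \, ds = \sum_{i,j} a^{ij} \partial_{ij}(u_1 - u_2)$ with $a^{ij}$ positive definite (since the admissible cone is convex and $S_k^{1/k}$ is concave on it), one applies the maximum principle to conclude $u_1 = u_2$. The main obstacle throughout is the boundary $C^2$ estimate, since the normal-normal derivative is not controlled by the equation alone without the $(k-1)$-convexity of $\partial\Omega$; this is precisely the role of the hypothesis $S_j(\kappa)\geq c_0$ and the reason this result is sharper than the general theory of fully nonlinear equations.
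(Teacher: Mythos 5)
The paper does not prove this theorem; it simply cites it as \cite[Theorems 3.3 and 3.4]{WangXJ09}, so there is no in-paper proof to compare against. Your sketch correctly reconstructs the standard continuity-method argument that the cited reference (and its antecedents, Caffarelli--Nirenberg--Spruck and Trudinger) actually use: openness via linearization and the positive definiteness of $\{S_k^{ij}(D^2 u)\}$ on the admissible cone (Garding's theory), closedness via a priori $C^0$, $C^1$, and $C^2$ bounds with the boundary double-normal estimate being the crux and being exactly where $(k-1)$-convexity $S_{k-1}(\kappa)\geq c_0>0$ enters, Evans--Krylov to pass from $C^{1,1}$ to $C^{2,\alpha}$, and Schauder bootstrap. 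Your uniqueness argument via integrating the Fr\'echet derivative along the segment $s u_1 + (1-s) u_2$ and applying the maximum principle is also the standard one; note only that it requires the segment to remain in the elliptic (admissible) region, which follows from convexity of the Garding cone rather than from concavity of $S_k^{1/k}$ per se. A minor point: the regularity you actually get from the continuity method with $f \in C^{1,1}$, $g \in C^{3,1}$ is $C^{3,\alpha}$ for every $\alpha < 1$, and the stated $C^{3,1}$ conclusion requires the additional step of differentiating the equation once and using the $C^{0,1}$ bound on $Df$; this is the content of Wang's global regularity theorem. Overall, your outline matches the intended source proof.
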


We will need some identities for the $k$-Hessian operator $S_k(D^2 u)$ which are derived explicitly for example in \cite[p. 5--6]{Gavitone2009}. See also \cite{WangXJ09}.
For a symmetric matrix $A=(a_{ij})_{i,j}=1,\ldots,n$ with eigenvalues $\lambda_i, i=1,\ldots,n$, let us also denote by $S_k(A)$ the $k$-th elementary symmetric polynomial of $\lambda$. This is equivalent to say that $S_k(A)$ is the sum of all $k \times k$ principal minors of $A$. Using the permutation definition of the determinant, we have
\begin{align} \label{k-minor}
S_k(A) = \frac{1}{k!}\sum_{1 \leq i_1,\cdots,i_k\leq n} \delta^{j_1,\cdots,j_k}_{i_1,\cdots,i_k} a_{i_1 j_1} \cdots a_{i_k j_k},
\end{align}
where $\delta^{j_1,\cdots,j_k}_{i_1,\cdots,i_k}$ is the generalized Kronecker delta which takes the value +1 if $i_1,\cdots,i_k$ differs from $j_1,\cdots,j_k$ by an even permutation and the value -1 in the case of an odd permutation. In other words, for a choice of $i_1,\ldots,i_k$, $\delta^{j_1,\cdots,j_k}_{i_1,\cdots,i_k}$ is the signature of the permutation $\sigma$ defined by $\sigma(i_l)=j_l, l=1,\ldots,k$. This implies that we only consider the case where the sets $\{i_1,\ldots,i_k\}$ and $\{j_1,\ldots,j_k\}$ are identical. Moreover we define $\delta^{j_1,\cdots,j_k}_{i_1,\cdots,i_k}$ to be 0 if $\{i_1,\ldots,i_k\} \neq \{j_1,\ldots,j_k\}$. Note also that $\{i_1,\ldots,i_k\}$ is a subset of $k$ elements of $\{ 1, \ldots, n \}$.

We have
\begin{align*}
S_k^{ij}(A)\coloneqq \frac{\partial}{\partial a_{ij}} S_k(A) = \frac{1}{(k-1)!} 
\sum_{1 \leq i, i_1,\cdots,i_{k-1}\leq n} \delta^{j,j_1,\cdots,j_{k-1}}_{i,i_1,\cdots,i_{k-1}} a_{i_1 j_1} \cdots a_{i_{k-1} j_{k-1}},  
\end{align*}
and so
$
S_k(A) = \frac{1}{k} \sum_{i,j=1}^n S_k^{ij}(A) a_{i j}
$
by the $k$-homogeneity of $S_k$ and Euler's theorem for homogeneous functions. Here $\{j_1,\ldots,j_{k-1}\}$ is the image of the set of $k-1$ elements $\{i_1,\ldots,i_{k-1}\}$ not containing $i$ by a permutation.

Let us denote by $\{S_k^{ij}(A) \}$ the symmetric matrix with entries $S_k^{ij}(A)$. We can write $S_k(A)=1/k \, \{S_k^{ij}(A) \}: A $, that is
$S_k(D^2 v) = \frac{1}{k} \{S_k^{ij}(D^2 v) \} : D^2 v$.
Using \eqref{k-minor} and observing that the expression of $S_k(A) $ can be written in terms of a multilinear map, we obtain
\begin{align} \label{k-Hdiv0}
S_k'(D^2 v) D^2 w =  \{S_k^{ij}(D^2 v) \}: D^2 w.
\end{align}

Let us denote by $ \{S_k^{ij}(A) \}' $ the Fr\'echet derivative of the mapping $A \to  \{S_k^{ij}(A) \}$. Since $\{S_k^{ij}(A) \}' (B)$ is a sum of terms each of which is a product of $k-2$ terms from $A$ and is linear in $B$, we have
\begin{equation} \label{Sij-der}
|| \{S_k^{ij}(D^2 v) \}' D^2 w  ||_{0;\Omega} \leq C |v|_{2;\Omega}^{k-2} |w|_{2;\Omega}.
\end{equation}
Using \eqref{alpha-prod2} and \eqref{Sij-der} we also have
\begin{equation} \label{cof-estimate}
|| \{S_k^{ij}(D^2 v) \}' D^2 w  ||_{0,\alpha;\Omega} \leq C |v|_{2,\alpha;\Omega}^{k-2} |w|_{2,\alpha;\Omega}.
\end{equation}

Finally we note that
\begin{lemma} \label{close}
Let $v$ be a $C^2$ strictly convex function with Hessian having smallest eigenvalue uniformly bounded below by a constant $a >0$. Then for $\eta=a/(2 n)$, we have $w$ strictly convex, whenever $||w-v||_{C^2(\Omega)} < \eta$. 
\end{lemma}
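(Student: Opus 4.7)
The plan is to reduce strict convexity of $w$ to a pointwise eigenvalue estimate on its Hessian, and obtain that estimate from a perturbation inequality for symmetric matrices.

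First I would observe that since $\|w-v\|_{C^2(\Omega)} < \eta$, every entry of the symmetric matrix $D^2 w(x)-D^2 v(x)$ is bounded in absolute value by $\eta$ at every $x\in\Omega$. Using the crude bound $|\xi^T M \xi|\le n\max_{ij}|m_{ij}|\,|\xi|^2$ valid for any $n\times n$ symmetric matrix $M=(m_{ij})$ (which follows from $|\xi_i\xi_j|\le \tfrac12(\xi_i^2+\xi_j^2)$ or Cauchy--Schwarz), the operator norm satisfies
\begin{equation*}
\|D^2 w(x)-D^2 v(x)\|_{\mathrm{op}} \le n\eta = \frac{a}{2}, \qquad x\in\Omega.
\end{equation*}

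Next I would invoke the min--max (Courant--Fischer) characterization of the smallest eigenvalue of a symmetric matrix, which yields the Weyl-type perturbation bound
\begin{equation*}
\lambda_{\min}(D^2 w(x)) \;\ge\; \lambda_{\min}(D^2 v(x)) - \|D^2 w(x)-D^2 v(x)\|_{\mathrm{op}}.
\end{equation*}
By hypothesis $\lambda_{\min}(D^2 v(x))\ge a$ uniformly in $x$, so combining with the previous display gives $\lambda_{\min}(D^2 w(x))\ge a - a/2 = a/2 >0$ for all $x\in\Omega$. Hence $D^2 w$ is positive definite throughout $\Omega$, which is precisely strict convexity of $w$.

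The only mildly delicate point is the constant $2n$ in the choice of $\eta$: it records the passage from the componentwise sup bound provided by the $C^2$ norm to the operator norm controlling eigenvalues. Any reasonable alternative convention (e.g.\ Frobenius norm, giving $\sqrt{n^2}\eta=n\eta$) produces the same constant, so no subtlety arises. There is no real obstacle here; the statement is a short direct consequence of the continuity of eigenvalues under perturbation of a symmetric matrix.
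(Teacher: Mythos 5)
Your proof is correct and follows essentially the same strategy as the paper: reduce strict convexity of $w$ to a lower bound on the smallest eigenvalue of $D^2 w$, and obtain that bound from an eigenvalue perturbation inequality that converts the componentwise $C^2$ closeness into eigenvalue closeness. The only cosmetic difference is the source of the perturbation bound: the paper quotes the Hoffman--Wielandt result to get $|\lambda_l(A)-\lambda_l(B)|\le n\max_{i,j}|A_{ij}-B_{ij}|$ directly, whereas you pass through the operator norm and invoke Weyl's inequality via Courant--Fischer; the resulting constant and conclusion are identical.
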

\begin{proof}
It follows from \cite[Theorem 1 and Remark 2 p. 39]{Hoffman53}  that for two symmetric $n \times n$ matrices $A$ and $B$, 
\begin{equation} \label{cont-eig}
|\lambda_l(A) - \lambda_l(B)| \leq n \max_{i,j} |A_{ij} - B_{ij}|, l=1, \ldots, n.
\end{equation}
It follows that for $u, v \in C^2(\Omega)$, 
\begin{align}
|\lambda_1( D^2 u(x)) - \lambda_1( D^2 v(x))| & \leq n ||w-v||_{C^2(\Omega)} \label{lambda1}.
\end{align}
The result then follows.
\end{proof}

We conclude this section with the equivalence of $n$-convexity and convexity in the usual sense.
\begin{lemma} \label{n-convexity}
A $C^2$ function $u$ is convex if and only if it is $n$-convex.
\end{lemma}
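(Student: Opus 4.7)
The plan is to reduce the statement to the familiar equivalence, for $C^2$ functions, between convexity and positive semidefiniteness of the Hessian, and hence to the claim that a symmetric $n \times n$ matrix has all eigenvalues nonnegative if and only if $S_l(\lambda) \geq 0$ for every $1 \leq l \leq n$. Fix $x \in \Omega$ and write $\lambda_1, \ldots, \lambda_n$ for the eigenvalues of $D^2 u(x)$.

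The forward direction is immediate: if $u$ is convex then $D^2 u(x)$ is positive semidefinite, so each $\lambda_i \geq 0$, and then every $S_l(\lambda)$ is a sum of products of nonnegative real numbers, hence nonnegative.

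For the reverse direction, which is the substantive step, I would exploit the fact that the $S_l(\lambda)$ are, up to signs, the coefficients of the characteristic polynomial of $D^2 u(x)$. Concretely, I would consider
\begin{equation*}
q(s) \;=\; \prod_{i=1}^{n} (s + \lambda_i) \;=\; s^n + S_1(\lambda)\, s^{n-1} + S_2(\lambda)\, s^{n-2} + \cdots + S_n(\lambda),
\end{equation*}
which is verified by expanding the product (or by evaluating the characteristic polynomial at $-s$). Assuming $n$-convexity, every coefficient of $q$ is $\geq 0$, so $q(s) \geq s^n > 0$ for all $s > 0$. On the other hand, if some eigenvalue $\lambda_j$ were strictly negative, then $s_0 \coloneqq -\lambda_j > 0$ would satisfy $q(s_0) = 0$, contradicting the strict positivity just established. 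Therefore every $\lambda_i \geq 0$, so $D^2 u(x)$ is positive semidefinite at every $x$, which is equivalent to convexity of the $C^2$ function $u$.

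There is no real obstacle here; the only point to be careful about is the sign bookkeeping in the identity relating $q(s)$ to the elementary symmetric polynomials, and the observation that the contradiction requires only $s > 0$, not a strict inequality on the $S_l$.
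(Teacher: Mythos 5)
Your proof is correct and follows essentially the same route as the paper: both arguments reduce the reverse direction to the observation that the characteristic polynomial of $D^2u(x)$ cannot vanish at a strictly negative number when every $S_l \geq 0$. The paper works directly with $p(\lambda) = \lambda^n + \sum_l (-1)^l S_l(\lambda)\lambda^{n-l}$ evaluated at a putative negative eigenvalue $\lambda_i$ and then multiplies through by $(-1)^n$; your substitution $q(s) = (-1)^n p(-s) = \prod_i (s+\lambda_i)$ does the same sign-cleanup up front, which is arguably a tidier presentation of the identical argument.
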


\begin{proof}
If $u$ is $C^2$, $\lambda_i \geq 0$ on $\Omega$ for all $i$ and thus $S_l(D^2 u) \geq 0, l=1,\ldots,n$. 

Conversely let us assume that $A$ is a symmetric matrix with
$S_l(A) \geq 0, l=1,\ldots,n$. We show that its eigenvalues $\lambda_i$ are all positive. Let
$$
p(\lambda)= \lambda^n + c_1 \lambda^{n-1} + \ldots + c_n,
$$
denote the characteristic polynomial of $A$. It can be shown \cite[Theorem 1.2.12]{Horn85} that
$$
c_l = (-1)^l S_l(A), l=1,\ldots,n.
$$
We show that if $\lambda_i <0$ then $p(\lambda_i) \neq 0$. We have
\begin{align*}
p(\lambda_i) & = \lambda_i^n + c_1 \lambda_i^{n-1} + \ldots + c_n \\
 & = \lambda_i^n + \sum_{l=1}^n (-1)^{l} S_l(A) \lambda_i^{n-l} \\
 & = (-1)^n \bigg( (-\lambda_i)^n + \sum_{l=1}^{n} (-1)^{l-n} S_l(A) \lambda_i^{n-l}\bigg) \\
 & = (-1)^n \bigg( (-\lambda_i)^n + \sum_{l=1}^{n}  S_l(A) (-\lambda_i)^{n-l}\bigg). 
\end{align*}
Since $-\lambda_i >0$ and $S_l(A) \geq 0$ for all $l$, we have $(-1)^n p(\lambda_i) \geq 0$. Moreover since 
$\sum_{l=1}^{n}  S_l(A) (-\lambda_i)^{n-l} \geq 0$ and $-\lambda_i > 0$ we have $(-1)^n p(\lambda_i) \neq 0$.
We conclude that $\lambda_i \geq 0$ for all $i$. This completes the proof.
\end{proof}

\subsection{Discrete Schauder estimates and related tools} \label{disc-schauder}
We will study the numerical approximation of \eqref{k-H1}--\eqref{k-H-iterative} by standard finite difference discretizations. 
For simplicity, we consider a cuboidal domain $\Omega = (0,1)^n \subset \R^n$. Let $0 < h < 1 \, \text{with} \, 1/h \in \mathbb{Z}$. Put
\begin{align*}
\mathbb{Z}_h & = \{x=(x_1,\ldots,x_n)^T \in \R^n: x_i/h \in \mathbb{Z} \}\\
\Omega^h_0 &= \Omega \cap  \mathbb{Z}_h, \Omega^h = \tir{\Omega} \cap  \mathbb{Z}_h, \partial \Omega^h = \partial \Omega \cap \mathbb{Z}_h= \Omega^h \setminus \Omega^h_0.
\end{align*}
Let $e^i, i=1,\ldots,n$ denote the $i$-th unit vector of $\mathbb{R}^n$. We define the following first order difference operators on the space $\mathcal{M}(\Omega^h)$ of grid functions $v^h(x), x \in \mathbb{Z}_h$,
\begin{align*}
\partial^i_{+} v^h(x) & \coloneqq \frac{v^h(x+he^i)-v^h(x)}{h}, \\
\partial^i_{-} v^h(x) & \coloneqq \frac{v^h(x)-v^h(x-he^i)} {h},\\
\partial^i_h v^h(x) & \coloneqq \frac{v^h(x+he^i)-v^h(x-he^i)}{2 h}.
\end{align*}
Higher order difference operators are obtained by combining the above difference operators. For a multi-index $\beta=(\beta_1,\ldots,\beta_n) \in \mathbb{N}^n$, we define
$$
\partial^{\beta}_{+} v^h \coloneqq  \partial^{\beta_1}_{+} \cdots \partial^{\beta_n}_{+}v^h.
$$
The operators $\partial^{\beta}_{-}$ and $\partial^{\beta}_{h}$ are defined similarly. Note that
\begin{align}
\partial^i_{+} \partial^i_{-} v^h(x) & = \frac{v^h(x+he^i)-2v^h(x)+v^h(x-he^i)}{h^2}, \label{second-disc1}
\end{align}
\begin{align}
\begin{split}
\partial^i_h \partial^j_h v^h(x) & = \frac{1}{4 h^2} \bigg\{v^h(x+he^i+h e^j)+v^h(x-he^i-h e^j) \\
& \qquad \qquad \qquad -v^h(x+he^i-h e^j)-v^h(x-he^i+ he^j)\bigg\}, i \neq j. \label{second-disc2}
\end{split}
\end{align}
The second order derivatives $\partial^2 v/\partial x_i \partial x_j$ are discretized using \eqref{second-disc1} and \eqref{second-disc2} for $i \neq j$. This gives a discretization of the Hessian $D^2 u$ which we denote by $\mathcal{H}_d(u^h)$.

Thus the discrete version of \eqref{k-H1} takes the form
\begin{align} \label{k-H1h}
S_k (\mathcal{H}_d \, u^h(x)) = f(x), x \in \Omega^h_0, u^h(x) = g(x)\, \text{on} \, \partial \Omega^h. 
\end{align}
The discrete Laplacian takes the form
\begin{align} \label{second-disc3}
\Delta_d (u^h) = \sum_{i=1}^n \partial^i_{+} \partial^i_{-} u^h.
\end{align}
We consider a discrete uniformly elliptic linear operator 
with low order terms
\begin{align*}
L_d v^h(x) = \sum_{i,j=1}^n a^{ij}(x) \partial^i_- \partial^j_+ v^h(x) + \sum_{i=1}^n b^{i}(x) \partial^i_+  v^h(x), x \in \Omega_0^h,
\end{align*}
i.e. the matrix $(a^{ij}(x))_{i,j=1,\ldots,n}$ is uniformly positive definite. 
We now define discrete analogues of the H$\ddot{\text{o}}$lder norms and semi-norms following \cite{Johnson74}. Let $[\xi,\eta]$ denote the set of points $\zeta \in \Omega^h$ such that $\xi_j \leq \zeta_j \leq \eta_j, j=1,\ldots,n$. Then for $v^h \in \mathcal{M}(\Omega^h), 0 < \alpha < 1$, we define
\begin{align*}
|v^h|_{j;\Omega_0^h} & = \, \text{max} \, \{\, |\partial^{\beta}_{+}v^h (\xi)|, |\beta|=j, [\xi,\xi+\beta] \subset \Omega^h \, \}, \\
[v^h]_{j,\alpha;\Omega_0^h} & = \, \text{max} \, \bigg\{\, \frac{|\partial^{\beta}_{+}v^h (\xi)- \partial^{\beta}_{+}v^h (\eta)|}{( |\xi-\eta|)^{\alpha}}, 
|\beta|=j, \xi \neq \eta, [\xi,\xi+\beta] \cup [\eta,\eta+\beta] \subset \Omega^h  \, \bigg\}, \\
||v^h||_{p;\Omega_0^h} & = \, \text{max}_{j \leq p} \, |v^h|_{j;\Omega_0^h},  \\
||v^h||_{p,\alpha;\Omega_0^h} & = ||v^h||_{p;\Omega_0^h}+ [v^h]_{p,\alpha;\Omega_0^h}. 
\end{align*}
The above norms are extended canonically to vector fields and matrix fields by taking the maximum over all components. For $j=0$, we have discrete analogues of the maximum and $C^{0,\alpha}$ norms. 

For a domain $O \subset \R^n$, we denote by $\mathcal{D}_h(O)$ the set of mesh functions on $\R^n$ which vanish outside $O$. If $v^h=0$ on $\partial \Omega^h$, extending $v^h$ by 0 to $\mathbb{Z}_h$, we obtain $v^h \in \mathcal{D}_h(\Omega)$. The following theorem then follows from 
\cite[Lemma 3.4]{Thomee1970}. 

\begin{thm} \label{discShauderPoisson}
Assume $ 0<\alpha<1$ and $v^h=0$ on $\partial \Omega^h$. Then there are constants $C$ and $h_0$ such that for $v^h \in \mathcal{M}(\Omega^h), h \leq h_0$
\begin{align} \label{discShauderPoisson0}
||v^h||_{2,\alpha;\Omega_0^h} \leq C ||L_d \, v^h||_{0,\alpha;\Omega_0^h}, 
\end{align}
with the constant $C$ independent of $h$.
\smallskip
\end{thm}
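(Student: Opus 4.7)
The plan is to reproduce the Thomée argument, which mirrors the classical Schauder theory but uses discrete potential theory in place of the Newtonian potential. The key ingredients are a uniform discrete Green's function estimate and a Campanato-style dyadic split of the singular convolution kernel.

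First, I would construct a discrete Green's function $G^h(x,y)$ for $\Delta_d$ on $\Omega$ with zero boundary values, and establish, uniformly in $h$, the pointwise bounds
\begin{equation*}
|\partial_+^\beta G^h(x,y)| \leq C\,|x-y|^{2-n-|\beta|}, \qquad |\beta| \leq 2, \ x\neq y,
\end{equation*}
along with a Hölder-type bound for $\partial_+^\beta G^h$ in the $x$-variable when $|\beta|=2$. These follow by barrier comparison with the continuous Green's function of $\Omega$, plus reflection near the boundary (the domain is a cuboid, which is convenient for this) and a discrete Fourier argument on $\mathbb{Z}_h^n$ to control the singularity at $y=x$. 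Since $v^h=0$ on $\partial\Omega^h$, one has the representation
\begin{equation*}
v^h(x) = -h^n \sum_{y\in\Omega_0^h} G^h(x,y)\,\Delta_d v^h(y), \qquad x\in\Omega_0^h.
\end{equation*}

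Second, applying the second-order difference operator $\partial_h^i\partial_h^j$ to this representation yields
\begin{equation*}
\partial_h^i\partial_h^j v^h(x) = -h^n \sum_{y\in\Omega_0^h} K_{ij}^h(x,y)\,\Delta_d v^h(y),
\end{equation*}
where $K_{ij}^h$ is a discrete Calderón–Zygmund kernel with $|K_{ij}^h(x,y)|\le C|x-y|^{-n}$ and with the expected cancellation when summed over spherical shells. The standard device of subtracting a constant, $\Delta_d v^h(y) = \Delta_d v^h(x) + (\Delta_d v^h(y)-\Delta_d v^h(x))$, exploits this cancellation and produces the $L^\infty$ bound
\begin{equation*}
|\partial_h^i\partial_h^j v^h|_{0,\Omega_0^h} \leq C\big(\|\Delta_d v^h\|_{0,\alpha,\Omega_0^h} + |v^h|_{0,\Omega_0^h}\big),
\end{equation*}
where the $|v^h|_{0,\Omega_0^h}$ term absorbs contributions coming from the boundary portion of the representation and from the lower-order terms generated by the cancellation argument.

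Third, for the Hölder seminorm $[\partial_h^i\partial_h^j v^h]_{0,\alpha,\Omega_0^h}$, I would take two grid points $\xi,\eta\in\Omega_0^h$ with $d=|\xi-\eta|$ and split the sum into the near region $B=\{y:|y-\xi|\le 2d\}$ and its complement. On $B$, the same cancellation trick (now with $\Delta_d v^h(y)-\Delta_d v^h(\xi)$) together with the kernel bound $|K_{ij}^h|\le C|x-y|^{-n}$ gives a contribution of order $d^\alpha\|\Delta_d v^h\|_{0,\alpha,\Omega_0^h}$. On the complement, Taylor expansion in $x$ together with the derivative bound $|\nabla_x K_{ij}^h(x,y)|\le C|x-y|^{-n-1}$ produces the same power of $d$. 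Summing the two pieces and combining with the previous step yields \eqref{discShauderPoisson0}.

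The main obstacle is the first step: obtaining uniform-in-$h$ discrete Green's function estimates with the correct singular behaviour and Hölder-type control on its second differences. This is the discrete analogue of the Calderón–Zygmund theory and is where most of the technical work lies; once these bounds are in hand, steps two and three proceed essentially as in the continuous Schauder proof.
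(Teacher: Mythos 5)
The paper does not prove this statement at all --- it is quoted directly from Thom\'{e}e's 1968 paper, with the sentence preceding the theorem simply reading ``we therefore have by [Thom\'{e}e, Theorem~2.1]''. So there is no in-paper argument against which to compare yours step by step. That said, your outline is a faithful reconstruction of the potential-theoretic machinery underlying Thom\'{e}e's result: represent $v^h$ via a discrete Green's function, establish uniform-in-$h$ singular-kernel and cancellation estimates for the second differences of that Green's function, and then run the near/far split with the subtracted-constant trick to get both the $L^\infty$ and H\"older pieces. This is essentially the discrete translation of the classical Schauder argument and is the right route.

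One point worth flagging. Thom\'{e}e's Theorem~2.1 is an \emph{interior} estimate, and the norm $\| \cdot \|_{2,\alpha,\Omega_0^h}$ in \eqref{discShauderPoisson0} runs over grid points arbitrarily close to $\partial\Omega$, so the stated bound is really global; the paper's remark that ``$\Omega_0^h$ is an interior domain'' glosses over the passage from interior to up-to-the-boundary control. Your proposal handles this correctly by building a Green's function with zero Dirichlet data on the cuboid and invoking reflection across the flat faces (the Johnson reference cited nearby in the paper plays exactly this role for plane boundary portions). So if anything your sketch is more explicit about the boundary issue than the paper. The genuine remaining work, as you note, is step one: the uniform-in-$h$ kernel bounds $|K_{ij}^h(x,y)|\le C|x-y|^{-n}$ and the corresponding H\"older/gradient control; barrier comparison gives the zeroth-order decay but not the second-difference estimates, which require the discrete Fourier/asymptotic-expansion analysis that occupies the bulk of Thom\'{e}e's paper. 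Your outline identifies that correctly as the crux; it just cannot be dispatched by the one-line appeal to barriers alone.
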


Since
\begin{align*}
\partial^i_{+} \partial^i_{-} v^h(x) & =\partial^i_{+} \partial^i_{+} v^h (x-h e^i)  \, \text{and} \, \\
\partial^j_{h} \partial^i_{h} v^h(x) & = \frac{1}{4}\bigg(\partial^j_{+} \partial^i_{+} v^h (x) + \partial^j_{+} \partial^i_{+} v^h (x- h e^i) +  \partial^j_{+} \partial^i_{+} v^h (x- h e^j) \\
& \qquad \qquad \qquad +\partial^j_{+} \partial^i_{+} v^h (x- h e^i-h e^j) \bigg), 
\end{align*}
we have max $\{||\partial^i_{+} \partial^i_{-} v^h ||_{0,\alpha;\Omega_0^h}, ||\partial^j_{h} \partial^i_{h} v^h||_{0,\alpha;\Omega_0^h}, i,j=1,\ldots,n  \} \leq  ||v^h||_{2,\alpha;\Omega_0^h}$ and hence the above theorem also applies when the second order derivatives \eqref{second-disc1}
and \eqref{second-disc2} are used in the definition of $|| . ||_{2,\alpha;\Omega_0^h}$.

By Taylor series expansions, it is not difficult to verify that for $v \in C^2(\Omega)$
\begin{align*}
|v |_{j;\Omega_0^h} \leq |v|_{2;\Omega}, j \leq 2.
\end{align*}
Moreover, for $v \in C^{4,\alpha}(\Omega)$,
\begin{align} \label{consistent}
||D^2 v - \mathcal{H}_d(v)||_{0;\Omega_0^h} \leq C h^2 |v|_{4;\Omega},
\end{align}
and 
\begin{align*} 
[D^2 v - \mathcal{H}_d( v) ]_{0,\alpha;\Omega_0^h} \leq C h^{2} [v]_{4,\alpha;\Omega}.
\end{align*}
To see that the last inequality holds, it is enough to consider a function of one variable $v \in C^{4,\alpha}(-1,1)$ and estimate 
$[v''(x)-(v(x+h)-2v(x)+v(x-h))/h^2]_{0,\alpha}$. Now,
$$
v''(x)-\frac{v(x+h)-2v(x)+v(x-h)}{h^2} = -\frac{h^2}{24} (v^{(4)}(x+ t_1 h) + v^{(4)}(x- t_2 h)), t_1, t_2 \in [0,1].
$$
Next we note that, using the definition, the $C^{0,\alpha}$ norm of $v^{(4)}(x+ t_1 h)$ is bounded above by the $C^{0,\alpha}$ norm of $v^{(4)}$. The result then follows.

We have for $v \in C^{4,\alpha}(\Omega)$,
\begin{align} \label{consistent2}
|| D^2 v - \mathcal{H}_d( v)||_{0,\alpha;\Omega_0^h} \leq C h^{2} ||v||_{4,\alpha;\Omega}.
\end{align}

\begin{lemma} \label{est-d2}
We have for $u \in C^{4,\alpha}(\Omega)$
$$
||S_k (D^2 u)-S_k (\mathcal{H}_d( u)) ||_{0,\alpha;\Omega_0^h}  \leq C h^{2} |u|_{2;\Omega}^{k-1} ||u||_{4,\alpha;\Omega}.
$$
\end{lemma}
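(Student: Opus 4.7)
The plan is to reduce this consistency estimate to the Hessian-level bound \eqref{consistent2} by exploiting the polynomial nature of $S_k$. Since $S_k$ is a polynomial of degree $k$ in the matrix entries (see \eqref{k-minor}), we have at every grid point $r_h(S_k(D^2 u))(x)=S_k(r_h(D^2 u)(x))$, so the quantity to estimate is $S_k(A)-S_k(B)$ with $A := r_h(D^2 u)$ and $B := \mathcal{H}_d(r_h u)$.

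The key step is a mean-value representation: using \eqref{k-Hdiv0},
\begin{align*}
S_k(A)-S_k(B) = \bigg(\int_0^1 \{S_k^{ij}(M_t)\}\, dt\bigg) : (A-B), \qquad M_t := B + t(A-B).
\end{align*}
I would then apply the discrete analogue of \eqref{alpha-prod2} (whose proof carries over verbatim from the continuous case) to obtain
\begin{align*}
\|S_k(A)-S_k(B)\|_{0,\alpha,\Omega_0^h} \le C \sup_{t\in[0,1]} \|\{S_k^{ij}(M_t)\}\|_{0,\alpha,\Omega_0^h}\cdot \|A-B\|_{0,\alpha,\Omega_0^h}.
\end{align*}
The second factor is exactly controlled by \eqref{consistent2}, producing the factor $h^2\|u\|_{4,\alpha,\Omega}$. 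For the first factor, since $\{S_k^{ij}(\cdot)\}$ is a polynomial of degree $k-1$ in the matrix entries (this is the same observation that gave \eqref{Sij-der}), iterated use of the product rule yields a bound of the form $C|M_t|_0^{k-1}$ up to lower order Hölder terms, and Taylor expansion estimates (used to prove \eqref{consistent}) give $|M_t|_0 \le C|u|_{2,\Omega}$ uniformly in $t$ for $h$ small.

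The main bookkeeping obstacle is to justify that the $C^0$ bound $|u|_{2,\Omega}^{k-1}$ (rather than a full Hölder norm) controls the polynomial factor. For this, I would invoke the sharper discrete product inequality
\begin{align*}
\|fg\|_{0,\alpha,\Omega_0^h} \le |f|_{0,\Omega_0^h}\|g\|_{0,\alpha,\Omega_0^h} + |g|_{0,\Omega_0^h}[f]_{0,\alpha,\Omega_0^h},
\end{align*}
applied with $f=\{S_k^{ij}(M_t)\}$ and $g=A-B$, so that $k-1$ copies of $|M_t|_0$ appear from the first term (bounded by $|u|_{2,\Omega}^{k-1}\cdot h^2\|u\|_{4,\alpha,\Omega}$), while the remaining term $|g|_0[f]_{0,\alpha}$ is absorbed using $|A-B|_0 \le Ch^2|u|_{4,\Omega}$ from \eqref{consistent}, together with the fact that any Hölder seminorm of $M_t$ is bounded in terms of $\|u\|_{2,\alpha,\Omega}\le \|u\|_{4,\alpha,\Omega}$. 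Combining these contributions and taking the supremum in $t$ yields the stated bound.
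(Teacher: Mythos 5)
Your argument follows essentially the same route as the paper's proof: a mean-value representation via \eqref{k-Hdiv0} reducing $S_k(A)-S_k(B)$ to $\{S_k^{ij}(M)\}:(A-B)$, then the Hölder product inequality \eqref{alpha-prod2} together with the consistency estimate \eqref{consistent2} for $A-B$, and a degree-$(k-1)$ polynomial bound on the coefficient matrix. So the proposal is accepted as correct in spirit and method.

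Two of your refinements are worth flagging because they actually tighten up points the paper elides. First, you use the integral form of the mean-value theorem, $S_k(A)-S_k(B)=\big(\int_0^1 \{S_k^{ij}(M_t)\}\,dt\big):(A-B)$, whereas the paper invokes a pointwise mean-value theorem and so implicitly produces a $t$ that depends on the grid point $x$; applying \eqref{alpha-prod2} then requires a Hölder norm of $x\mapsto \{S_k^{ij}(M_{t(x)}(x))\}$, which is not a priori well controlled. The integral form sidesteps this. Second, you correctly observe that the plain product inequality \eqref{alpha-prod2} only delivers a bound through $\|\{S_k^{ij}(M_t)\}\|_{0,\alpha}$, which carries a Hölder seminorm and not merely $|u|_{2,\Omega}^{k-1}$; the paper's step from $\|S_k^{ij}(\cdot)\|_{0,\alpha}$ to $(|u|_{2,\Omega}+|r_h u|_{2,\Omega_0^h})^{k-1}$ is stated without comment. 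Your sharper product estimate $\|fg\|_{0,\alpha}\le |f|_0\|g\|_{0,\alpha}+|g|_0[f]_{0,\alpha}$ is the right tool to separate the $C^0$ factor from the seminorm contribution. One small caveat: the second piece of that estimate produces something of the form $C h^2 |u|_{4,\Omega}\,|u|_{2,\Omega}^{k-2}\,\|u\|_{2,\alpha}$ (plus a higher-order term), whose norm factorization does not literally coincide with the claimed $|u|_{2,\Omega}^{k-1}\|u\|_{4,\alpha}$; this is a bookkeeping mismatch that the paper's own statement also does not resolve, and both are harmless for the paper's purpose, which is only to obtain an $O(h^2)$ bound with constants depending on $\|u\|_{4,\alpha,\Omega}$. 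Overall your version is the more careful account of essentially the same proof.
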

\begin{proof}
By the mean value theorem, using \eqref{k-Hdiv0}, we have for some $t$ in $[0,1]$, and $x\in \Omega_0^h$, 
\begin{align*}
S_k (D^2 u) (x) - S_k (\mathcal{H}_d( u))(x) & = S_k'(t D^2(u)(x) + (1-t) \mathcal{H}_d( u)(x)):  (D^2 u (x) \\
& \qquad \qquad \qquad \qquad \qquad \qquad \qquad \qquad - \mathcal{H}_d( u)(x)) \\
& = \sum_{i,j=1}^n S_k^{ij} (t D^2(u)(x) + (1-t) \mathcal{H}_d( u)(x)) (D^2 u (x) \\
& \qquad \qquad \qquad \qquad \qquad \qquad \qquad \qquad - \mathcal{H}_d( u)(x))_{ij}.
\end{align*}
Using \eqref{alpha-prod2}, it follows that
\begin{align*}
||S_k (D^2 u)-S_k (\mathcal{H}_d( u) ) ||_{0,\alpha;\Omega_0^h} & \leq C (|u|_{2;\Omega} + | u|_{2;\Omega_0^h} )^{k-1} ||D^2 u - \mathcal{H}_d( u)||_{0,\alpha;\Omega_0^h} \\
& \leq C h^{2} |u|_{2;\Omega}^{k-1} ||u||_{4,\alpha;\Omega}. 
\end{align*}
\end{proof}

\section{Approximations by linear elliptic problems} \label{elliptic}
In this section, we prove the convergence of the iterative method \eqref{broyden} and its discrete version. As indicated in the introduction, we also obtain the existence and uniqueness of the solution of the discrete version of \eqref{k-H1}, i.e. \eqref {k-H1h}, as well as error estimates.

\subsection{Convergence at the operator level} 

We assume that 
there is a unique $k$-admissible solution $u \in C^{2,\alpha}(\tir{\Omega})$ of \eqref{k-H1} for $0 < \alpha < 1$. Let $u^0 \in C^{2,\alpha}(\tir{\Omega})$ such that 
\begin{equation} \label{is-delta}
||u-u^0||_{2,\alpha;\Omega} < \delta.
\end{equation}
For $k=n$, using an eigenvalue argument, it is not difficult to prove that the cofactor matrix is uniformly positive definite under the assumption $f \geq f_0 >0$ for a constant $f_0$. We assume that the matrix  $\{S_k^{ij}(D^2 u) \}$ is uniformly positive definite. We claim that this holds if $u \in C^2(\tir{\Omega})$ and there is $c_3 >0$ such that
 $$
 c_3 \leq S_l(D^2 u), 1 < l \leq k.
 $$
 We then have 
 \begin{equation} \label{c4}
 c_3 \leq S_l(D^2 u) \leq c_4, 1 < l \leq k,
 \end{equation}
  for a constant $c_4$. The proof is essentially given as \cite[Theorem 1.3 ]{Gavitone2009}. We define
 $$S_k^i(\lambda):= \frac{\partial}{\partial \lambda_i} S_k(\lambda).$$
 First we note from the proof of \cite[Theorem 1.3 ]{Gavitone2009} that the eigenvalues of $\{S_k^{ij}(D^2 u) \}$ are given by $S_k^i(\lambda(D^2 u)), 1 \leq i \leq n$. 
On the other hand, since $S_l(D^2 u) \geq c_3 >0, 1 < l \leq k$, we have by \cite[Proposition 1.1]{Caffarelli1985} 
$$
\frac{\partial}{\partial \lambda_i} S_k(\lambda)^{\frac{1}{k}} >0 \text{ for } \lambda=\lambda(D^2 u).
$$
Finally, as $S_k(D^2 u) \leq c_4$ and $u \in C^2(\tir{\Omega})$, the result follows.

By the continuity of the smallest eigenvalue of a matrix as a function of its entries,  $\{S_k^{ij}(D^2 u^0) \}$ is also uniformly positive definite for $|u-u^0|_{2;\Omega}$ sufficiently small. 

Next, $\{S_k^{ij}(D^2 u^0)\}$ is a symmetric matrix and divergence free by \cite[ Formula 1.10 ]{Gavitone2009}. Thus we obtain 
 \begin{equation} \label{div-prop}
 \div\bigg( \{S_k^{ij}(D^2 u^0) \} D v) \bigg) = \{S_k^{ij}(D^2 u^0) \}: D^2 v.
 \end{equation}

We have
\begin{thm} \label{contT-broyden}
Under the assumption that there is a unique $k$-admissible solution $u \in C^{2,\alpha}(\tir{\Omega})$ of \eqref{k-H1} for $0 < \alpha < 1$, the sequence defined by \eqref{broyden} converges to $u$
for $u^0$ sufficiently close to $u$. 
\end{thm}
\begin{proof}
We define the operator  $R: C^{2,\alpha}(\tir{\Omega}) \to C^{2,\alpha}(\tir{\Omega})$ by
\begin{align*}
-\div\bigg( \{S_k^{ij}(D^2 u^0) \} D  (v-R v)\bigg) &= - S_k (D^2 v) + f  \, \text{in} \, \Omega \\
R(v) & = g  \, \text{on} \, \partial \Omega.
\end{align*}
By Theorem \ref{SchauderPoisson}, the operator $R$ is well defined. We show that for $\rho>0$ sufficiently small, $R$ is a strict contraction in the ball
$B_{\rho}(u) = \{v \in C^{2,\alpha}(\tir{\Omega}), ||u-v||_{2,\alpha;\Omega} < \rho \}$. 

For $v, w \in B_{\rho}(u)$ we have using \eqref{div-prop}
\begin{multline*}
\div\bigg( \{S_k^{ij}(D^2 u^0) \} D  (R v-R w) \bigg) =\div\bigg(\{S_k^{ij}(D^2 u^0) \} D  ( v- w) \bigg) + S_k (D^2 w) - S_k (D^2 v)\\
 =  -\{S_k^{ij}(D^2 u^0) \} : (D^2 w - D^2 v)  + S_k (D^2 w) - S_k (D^2 v).
\end{multline*}
Next, by the mean value theorem and using \eqref{k-Hdiv0}, we have for some $t$ in $[0,1]$, 
\begin{multline*}
S_k(D^2 w)-S_k(D^2 v)  =  \{S_k^{ij}(t D^2 w + (1-t) D^2 v) \} : D^2 (w-v) \\
= \{S_k^{ij}(t (D^2 w -D^2 u^0)+ (1-t) (D^2 v-D^2 u^0) + D^2 u^0) \} : D^2 (w-v).
\end{multline*}

We use \eqref{cof-estimate} to estimate the $C^{0,\alpha}$ norm of 
$$
A=\{S_k^{ij}(t (D^2 w -D^2 u^0)+ (1-t) (D^2 v-D^2 u^0) + D^2 u^0) \}  - \{S_k^{ij}(D^2 u^0) \}.
$$
For $0 \leq s \leq 1$ to be specified below, put
$$
\alpha_{s t} = s t (D^2 w -D^2 u^0)+ s (1-t) (D^2 v-D^2 u^0) + D^2 u^0.
$$
We have
\begin{equation} \label{a-st}
|\alpha_{s t}|_{0,\alpha;\Omega} \leq ||u^0-v||_{2,\alpha;\Omega}+||u^0-w||_{2,\alpha;\Omega} +||u^0||_{2,\alpha;\Omega}.
\end{equation}
By the mean value theorem, for some $s \in [0,1]$ we have
$$
A=  \{S_k^{ij}(\alpha_{s t} ) \}' (t (D^2 w -D^2 u^0)+ (1-t) (D^2 v-D^2 u^0)),
$$
and thus by \eqref{cof-estimate}
\begin{equation} \label{a-st2}
||A||_{0,\alpha;\Omega} \leq C |\alpha_{s t}|_{0,\alpha;\Omega}^{k-2} (||u^0-v||_{2,\alpha;\Omega}+||u^0-w||_{2,\alpha;\Omega} ).
\end{equation}
By Schauder estimates (Theorem \ref{SchauderPoisson}), \eqref{alpha-prod2}, \eqref{a-st} and \eqref{a-st2} we obtain
\begin{align} \label{contraction-cont-level}
\begin{split}
||R(v) - R(w)||_{2,\alpha;\Omega}  & \leq C ||A||_{0,\alpha;\Omega}  ||D^2(v-w)||_{0,\alpha;\Omega} \\
& \leq C (||u^0-v||_{2,\alpha;\Omega}+||u^0-w||_{2,\alpha;\Omega} +||u^0||_{2,\alpha;\Omega} )^{k-2}  \\
& \qquad \qquad  (||u^0-v||_{2,\alpha;\Omega}+||u^0-w||_{2,\alpha;\Omega} ) ||v-w||_{2,\alpha;\Omega} \\
& \leq  C (\rho+\delta+||u^0||_{2,\alpha;\Omega})^{k-2} (\rho+\delta)||v-w||_{2,\alpha;\Omega},
\end{split}
\end{align}
where $\delta$ measures how close $u^0$ is to $u$ \eqref{is-delta}. 
Thus, for $\rho$ and $\delta$ sufficiently small,  $R$ is a strict contraction in $B_{\rho}(u)$. 

It remains to show that $R$ maps $B_{\rho}(u)$ into itself. We note by the definition of $R$ and unicity of the solution of \eqref{k-H1}, a fixed point of $R$ solves \eqref{k-H1}.  Let $v \in B_{\rho}(u)$,
\begin{align*}
||u-R v||_{2,\alpha;\Omega} & =||R u-R v||_{2,\alpha;\Omega} \leq ||u-v||_{2,\alpha;\Omega}  \leq  \rho,
\end{align*}
which shows that $R$ maps $B_{\rho}(u)$ into itself. The existence of a fixed point follows from the Banach fixed point theorem. Moreover, the sequence defined by $u^{m+1}=R(u^m)$, i.e. the sequence defined by 
 \eqref{broyden},  converges for $\rho$ and $\delta$ sufficiently small to $u$.
\end{proof}
\subsection{Finite difference discretization}

Next 
we consider the following discrete version of \eqref{broyden}
\begin{align} \label{broyden-D}
\begin{split}
  \{S_k^{ij}(\mathcal{H}_d  \, u^{0,h}) \} : \mathcal{H}_d u^{m+1,h} & =   \{S_k^{ij}(\mathcal{H}_d \, u^{0,h}) \} : \mathcal{H}_d u^{m,h}  \\
& \qquad \qquad \qquad  \qquad  +f-S_k (\mathcal{H}_d \, u^{m,h}) \, \text{in} \,  \Omega^h_0  \\
 u^{m+1,h} & = g \, \text{on} \, \partial \Omega^h.
 \end{split}
\end{align}
Under the assumptions of Theorem \ref{disc-thm} below, we show that \eqref{k-H1h} has a unique solution to which the above sequence converges. Moreover, the convergence rate is O($h^{2}$). 
Define
\begin{equation} \label{ball-h}
B_{\rho} ( u) = \{v^h \in \mathcal{M}(\Omega^h), ||v^h- u||_{2,\alpha;\Omega_0^h} \leq \rho \}.
\end{equation}
\begin{lemma} \label{sum-lem}
Let $S^h: \mathcal{M}(\Omega^h) \to \mathcal{M}(\Omega^h)$ be a strict contraction with contraction factor less than 1/2, i.e. for $v^h, w^h \in \mathcal{M}(\Omega^h)$
$$
||S^h(v^h) - S^h(w^h)||_{2,\alpha;\Omega_0^h} \leq\frac{1}{2} ||v^h - w^h||_{2,\alpha;\Omega_0^h}. 
$$
Let us also assume that $S^h$ does not move the center $u$ of the ball $B_{\rho} ( u)$ too far, i.e.
$$
||S^h( u) -  u||_{2,\alpha;\Omega_0^h} \leq C_0 h^2. 
$$
Then $S^h$ maps $B_{\rho} ( u)$ into itself for $\rho=2 C_0 h^{2}$. Moreover $S^h$ has a unique fixed point $u^h$ in $B_{\rho} ( u)$ with the error estimate
$$
|| u - u^h ||_{2,\alpha;\Omega_0^h} \leq 2 C_0 h^2.
$$ 
\end{lemma}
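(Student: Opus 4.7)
The statement is essentially a quantitative version of the Banach fixed point theorem on a closed ball whose center is only approximately fixed. The plan is to verify the two standard hypotheses of the contraction mapping principle on the closed ball $B_\rho(r_h u)$ defined in \eqref{ball-h}, namely that $S^h$ is a self-map and that it is a contraction, and then read off the error estimate from the fact that the fixed point lies in the ball.

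First, I would show that $S^h$ maps $B_\rho(r_h u)$ into itself for the choice $\rho = 2 C_0 h^2$. Given $v^h \in B_\rho(r_h u)$, by the triangle inequality and the two assumed bounds,
\begin{align*}
\|S^h(v^h) - r_h u\|_{2,\alpha,\Omega_0^h}
&\leq \|S^h(v^h) - S^h(r_h u)\|_{2,\alpha,\Omega_0^h} + \|S^h(r_h u) - r_h u\|_{2,\alpha,\Omega_0^h} \\
&\leq \tfrac{1}{2} \|v^h - r_h u\|_{2,\alpha,\Omega_0^h} + C_0 h^2 \\
&\leq \tfrac{1}{2}(2 C_0 h^2) + C_0 h^2 = 2 C_0 h^2 = \rho,
\end{align*}
so $S^h(v^h) \in B_\rho(r_h u)$. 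This is exactly why the factor $1/2$ in the contraction hypothesis is paired with a ball of radius $2 C_0 h^2$: it makes the worst-case displacement close under the map.

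Next, since $\mathcal{M}(\Omega^h)$ is finite dimensional, the discrete H\"older norm $\|\cdot\|_{2,\alpha,\Omega_0^h}$ makes it a Banach space, and the ball $B_\rho(r_h u)$ is a closed (hence complete) subset. The hypothesis that $S^h$ is a strict contraction with factor $1/2$ on $\mathcal{M}(\Omega^h)$ restricts to the same property on $B_\rho(r_h u)$. The Banach fixed point theorem therefore gives a unique fixed point $u^h$ of $S^h$ in $B_\rho(r_h u)$. Finally, the error estimate is immediate from membership in the ball:
\[
\|r_h u - u^h\|_{2,\alpha,\Omega_0^h} \leq \rho = 2 C_0 h^2.
\]

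There is no real obstacle here; the lemma is an abstract bookkeeping statement, and the only point worth being careful about is that we are applying the contraction mapping principle on a closed subset (not all of $\mathcal{M}(\Omega^h)$), which is why the self-map verification has to be done first with the specific choice $\rho = 2 C_0 h^2$. The whole content is to package the perturbation argument so that in the later application, where $C_0$ and the contraction factor will come from the estimates of the previous section, an $O(h^2)$ consistency error translates directly into an $O(h^2)$ discrete solution error.
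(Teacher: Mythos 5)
Your proof is correct and takes essentially the same approach as the paper: a triangle inequality to establish the self-map property of $S^h$ on $B_\rho(r_h u)$ with $\rho = 2C_0 h^2$, followed by the Banach fixed point theorem. The only small difference is cosmetic: for the final error estimate the paper redoes a triangle inequality (getting $\|r_h u - u^h\| \leq C_0 h^2 + \tfrac{1}{2}\|u^h - r_h u\|$ and solving), whereas you simply read the bound off from $u^h \in B_\rho(r_h u)$; both give the identical estimate $2C_0 h^2$.
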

\begin{proof}
For $v^h \in B_{\rho} ( u)$,
\begin{align*}
||S^h(v^h)- u||_{2,\alpha;\Omega_0^h} & \leq ||S^h(v^h)- S^h( u)||_{2,\alpha;\Omega_0^h}+ ||S^h( u)- u||_{2,\alpha;\Omega_0^h}\\
& \leq \frac{1}{2} ||v^h- u||_{2,\alpha;\Omega_0^h} +  C_0 h^2 \\
& \leq \frac{\rho}{2}+C_0 h^2 \leq  \frac{\rho}{2} + \frac{\rho}{2}=\rho.
\end{align*}
This proves that  $S^h$ maps $B_{\rho} ( u)$ into itself. 
The existence of a fixed point follows from the Banach fixed point theorem. 
The convergence rate follows from the observation that
\begin{align*}
|| u - u^h ||_{2,\alpha;\Omega_0^h} & \leq || u - S^h( u) ||_{2,\alpha;\Omega_0^h} + ||S^h( u) - S^h(u^h) ||_{2,\alpha;\Omega_0^h} \\
 & \leq C_0 h^2  + \frac{1}{2} ||u^h- u||_{2,\alpha;\Omega_0^h}.
\end{align*}

\end{proof}

\begin{rem} \label{u0h-rem}
For $h$ sufficiently small, $\mathcal{H}_d (u)$ is sufficiently close to $D^2 u$ and hence $\{S_k^{ij}(\mathcal{H}_d  u)\}$ is positive definite, a property which also holds for $\{S_k^{ij}(\mathcal{H}_d \,u^{0,h})\}$ for $u^{0,h}$ sufficiently close to $u$. The arguments are similar to the ones of Lemma \ref{close}. See also Lemma \ref{lboundDelta} below.
\end{rem}
 
\begin{lemma}\label{lboundDelta} Let $u$ be a  $k$-admissible solution of \eqref{k-H1}. Assume that inf $f >0$ and $u \in C^4(\Omega)$. Then for $h$ sufficiently small, $\Delta_d (u) \geq c_0 >0$ where 
$c_0= 1/2 ((\text{inf} \, f)/c(k,n))^{1/k}$. Moreover, if $u$ is a strictly convex function, then for $\rho= O(h^{2})$, $\mathcal{H}_d( u)$ is a positive matrix and $v^h$ is a discrete convex function, when $v^h \in B_{\rho} ( u)$. 
\end{lemma}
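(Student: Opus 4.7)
The plan is to transfer lower bounds on eigenvalues from $D^2 u$ to $\mathcal{H}_d(r_h u)$ via the second-order consistency estimate \eqref{consistent} together with the Hoffman eigenvalue-perturbation bound \eqref{cont-eig}, and then to extend positivity to the ball $B_\rho(r_h u)$ by a second application of the same bound.

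First I would obtain the pointwise lower bound on the Laplacian. Since $u$ is $k$-admissible, i.e.\ $S_j(D^2 u) > 0$ for $1 \leq j \leq k$, the Maclaurin-type inequality \eqref{G-am} applies at every point of $\overline\Omega$ and yields $\Delta u(x) \geq (S_k(D^2 u)(x)/c(k,n))^{1/k} = (f(x)/c(k,n))^{1/k} \geq 2c_0 > 0$, using $\inf f > 0$ and the definition of $c_0$ from the statement. The pointwise form of \eqref{consistent} gives $|\Delta_d(r_h u)(x) - \Delta u(x)| \leq C h^2 |u|_{4,\Omega}$ for every $x \in \Omega_0^h$. Taking $h_2$ small enough that this error does not exceed $c_0$---a condition of the form $h_2 \leq C(c_0)/\|u\|_{4,\Omega}$ after absorbing powers into the constant---we conclude $\Delta_d(r_h u) \geq c_0$ for all $h \leq h_2$.

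For the second claim, assume $u$ is strictly convex, so that the smallest eigenvalue of $D^2 u(x)$ is bounded below by some $a > 0$ uniformly on $\overline\Omega$. Applying the Hoffman perturbation inequality \eqref{cont-eig} to the symmetric matrices $\mathcal{H}_d(r_h u)(x)$ and $D^2 u(x)$, combined with entry-wise \eqref{consistent}, the smallest eigenvalue of $\mathcal{H}_d(r_h u)(x)$ is at least $a - C n h^2 |u|_{4,\Omega} \geq a/2$ after shrinking $h_2$ if necessary. For any $v^h \in B_\rho(r_h u)$, each entry of $\mathcal{H}_d v^h(x) - \mathcal{H}_d(r_h u)(x)$ is a central second-order difference of $v^h - r_h u$ and is therefore bounded pointwise by $|v^h - r_h u|_{2,\Omega_0^h} \leq \|v^h - r_h u\|_{2,\alpha,\Omega_0^h} \leq \rho$. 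A further use of \eqref{cont-eig} then gives that the smallest eigenvalue of $\mathcal{H}_d v^h(x)$ is at least $a/2 - n\rho$, which is positive provided $\rho \leq a/(4n)$; any choice $\rho = O(h^2)$ with sufficiently small constant (automatically satisfying this for $h \leq h_2$) yields that $\mathcal{H}_d v^h$ is a positive matrix and $v^h$ is discretely convex.

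The only delicate point is bookkeeping: the three smallness conditions---on $h$ for the Laplacian lower bound, on $h$ for positive-definiteness of $\mathcal{H}_d(r_h u)$, and on $\rho$ for the perturbation across $B_\rho(r_h u)$---all have to be absorbed into the single threshold $h_2$ of the advertised form and the constant hidden in $\rho = O(h^2)$ chosen so that $a/2 - n\rho$ stays strictly positive. Nothing is analytically hard: the proof rests entirely on the Maclaurin inequality for $k$-admissible Hessians, the $O(h^2)$ consistency of central differences on $C^4$ data given by \eqref{consistent}, and the Hoffman perturbation bound \eqref{cont-eig} for eigenvalues of symmetric matrices.
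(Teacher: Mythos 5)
Your proof is correct, and for the first claim you take a genuinely different---and arguably tighter---route than the paper. The paper argues via consistency of the full $k$-Hessian operator: it shows $S_k(\mathcal{H}_d(r_h u)) \geq \tfrac{1}{2}\inf f$ using continuity of $S_k$ together with \eqref{consistent}, and then invokes \eqref{G-am} to deduce the Laplacian lower bound. For this step to be rigorous one needs the \emph{discrete} Hessian $\mathcal{H}_d(r_h u)$ to actually lie in the G\aa{}rding cone $\Gamma_k$ (i.e.\ $S_j(\mathcal{H}_d(r_h u)) > 0$ for all $j \leq k$), not merely to satisfy $S_k > 0$; the paper controls only $S_k$, and also asserts that $S_k > 0$ alone implies positive definiteness, which is not true for $k < n$. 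You sidestep this entirely by applying the Maclaurin inequality \eqref{G-am} to $D^2 u$ at the \emph{continuous} level---where $k$-admissibility is an explicit hypothesis---to get $\Delta u \geq 2c_0$ pointwise, and then transfer this bound to $\Delta_d(r_h u)$ using only the diagonal entries of \eqref{consistent}. That is cleaner and requires no discrete admissibility. For the second claim the two arguments are essentially the same two-step perturbation (from $D^2 u$ to $\mathcal{H}_d(r_h u)$ by consistency, then from $\mathcal{H}_d(r_h u)$ to $\mathcal{H}_d(v^h)$ across the ball $B_\rho$), but you make the eigenvalue control quantitative through the Hoffman--Wielandt bound \eqref{cont-eig} (exactly as in Lemma \ref{close}), whereas the paper relies on a softer ``continuity of eigenvalues'' appeal with an unspecified modulus $\gamma$. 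Your version gives explicit thresholds $a - Cnh^2|u|_{4,\Omega} \geq a/2$ and $\rho \leq a/(4n)$, which is strictly more informative.
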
 
\begin{proof}
Since the eigenvalues of a matrix are continuous functions of its entries (as roots of the characteristic  polynomial), for a matrix $A=(a_{ij})$ with 
$S_k (A) >0$, we have for $\epsilon >0$, the existence of $\gamma >0$ depending only on the space dimension $n$ such that  $|S_k ( B) - S_k ( A)| < \epsilon$ when $\text{sup}_{ij} |b_{ij}-a_{ij}| <\gamma$. This implies $S_k ( B)> S_k ( A) - \epsilon$. Thus with $\epsilon=S_k( A)/2$, we have $S_k ( B) > S_k ( A)/2$. 

For $h$ sufficiently small we have $C h^2 |u|_{4;\Omega} < \gamma$ and thus since $S_k(D^2 u) =f >  \text{inf} \, f >0 $, by \eqref{consistent} $S_k(\mathcal{H}_d( u)) \geq 1/2 \, \text{inf} \, f $. 
By \eqref{G-am} 
$$
\Delta_d  (u) \geq \frac{1}{2} ((\text{inf} \, f)/c(k,n))^{1/k}.
$$

Let $v^h \in B_{\rho} ( u)$. Then by definition of $ B_{\rho} ( u)$ and \eqref{consistent}
\begin{align*}
||\mathcal{H}_d(v^h) - \mathcal{H}_d( u)||_{0,\alpha;\Omega_0^h} & \leq ||\mathcal{H}_d(v^h) - D^2 u||_{0,\alpha;\Omega_0^h} + ||D^2 u- \mathcal{H}_d( u)||_{0,\alpha;\Omega_0^h} \\
& \leq \rho + C h^2  |u|_{4;\Omega},
\end{align*}
which can be made smaller than $\gamma$ for $h$ and $\rho$ sufficiently small. Thus given that $\mathcal{H}_d( u)$ is positive definite, the same holds for $\mathcal{H}_d(v^h)$. 
\end{proof}

\begin{thm}\label{disc-thm}
Assume that $u \in C^{4,\alpha}(\tir{\Omega})$ is $k$-admissible. 
Choose $u^{0,h}$ such that 
$||u^{0,h}- u||_{2,\alpha;\Omega_0^h} = O(h^2)$.
For $h$ sufficiently small, \eqref{k-H1h} has a locally unique solution $u^h$ which satisfies $\Delta_d (u^h) \geq 0$ and 
$u^h$ converges to the unique solution $u$ of \eqref{k-H1} as $h \to 0$ at the rate O$(h^{2})$. 
\end{thm}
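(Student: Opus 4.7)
The plan is to reproduce at the discrete level the fixed-point argument of Theorem \ref{contT-broyden}, packaging existence, uniqueness, and the $O(h^2)$ error bound into a single application of Lemma \ref{sum-lem}. Define $R^h : \mathcal{M}(\Omega^h) \to \mathcal{M}(\Omega^h)$ by
\[
-\div_h\bigl( \{ S_k^{ij}(\mathcal{H}_d\, u^{0,h}) \} D_h (v^h - R^h v^h) \bigr) = -S_k(\mathcal{H}_d\, v^h) + r_h(f) \text{ in } \Omega_0^h,
\]
and $R^h v^h = r_h(g)$ on $\partial \Omega^h$. Fixed points of $R^h$ are precisely solutions of \eqref{k-H1h}. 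By Remark \ref{u0h-rem}, for $h$ and $\delta$ sufficiently small the coefficient matrix $\{ S_k^{ij}(\mathcal{H}_d\, u^{0,h})\}$ is uniformly positive definite and $C^{0,\alpha}$-bounded in terms of $\|u\|_{4,\alpha,\Omega}$, so the discrete linear elliptic problem defining $R^h$ is uniquely solvable and $R^h$ is well defined.

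For the contraction hypothesis, take $v^h, w^h \in B_\rho(r_h u)$ and mirror the computation leading to \eqref{contraction-cont-level}. By the mean value theorem applied to $S_k$ together with \eqref{k-Hdiv0},
\[
\div_h\bigl( \{ S_k^{ij}(\mathcal{H}_d\, u^{0,h}) \} D_h (R^h v^h - R^h w^h) \bigr) = \bigl[ \{S_k^{ij}(A_{t}^h)\} - \{S_k^{ij}(\mathcal{H}_d\, u^{0,h})\} \bigr] : \mathcal{H}_d(w^h - v^h),
\]
with $A_t^h = t\,\mathcal{H}_d w^h + (1-t)\,\mathcal{H}_d v^h$. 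Bounding the bracket in $C^{0,\alpha}$ by \eqref{cof-estimate} and \eqref{alpha-prod2}, and applying the discrete Schauder estimate to the left-hand operator, yields the discrete analogue of \eqref{contraction-cont-level},
\[
\|R^h v^h - R^h w^h\|_{2,\alpha,\Omega_0^h} \leq C\bigl(\rho + \delta + \|r_h u\|_{2,\alpha,\Omega_0^h}\bigr)^{k-2}(\rho+\delta)\, \|v^h - w^h\|_{2,\alpha,\Omega_0^h},
\]
which is $\leq \tfrac{1}{2}\|v^h - w^h\|_{2,\alpha,\Omega_0^h}$ once $\rho$ and $\delta$ are small, independently of $h$. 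For the consistency hypothesis, set $v^h = r_h u$; since $r_h u - R^h(r_h u)$ vanishes on $\partial \Omega^h$, the discrete Schauder estimate applied to
\[
\div_h\bigl( \{S_k^{ij}(\mathcal{H}_d\, u^{0,h})\} D_h (r_h u - R^h(r_h u)) \bigr) = S_k(\mathcal{H}_d\, r_h u) - r_h(f),
\]
together with $f = S_k(D^2 u)$ and Lemma \ref{est-d2}, gives $\|R^h(r_h u) - r_h u\|_{2,\alpha,\Omega_0^h} \leq C_0 h^2$ with $C_0 \leq C |u|_{2,\Omega}^{k-1}\|u\|_{4,\alpha,\Omega}$.

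Both hypotheses of Lemma \ref{sum-lem} are then satisfied on $B_\rho(r_h u)$ with $\rho = 2 C_0 h^2$ (which for $h \leq h_2$ is smaller than the smallness threshold on $\rho$ imposed above), producing a unique fixed point $u^h$ of $R^h$ in that ball with $\|r_h u - u^h\|_{2,\alpha,\Omega_0^h} \leq 2 C_0 h^2$; this is the claimed discrete solution and the $O(h^2)$ convergence rate. Finally, since $u^h \in B_\rho(r_h u)$ with $\rho = O(h^2)$, the first part of Lemma \ref{lboundDelta} (which does not require global convexity of $u$, only the positivity of $f$) gives $\Delta_d(r_h u) \geq c_0 > 0$, and a triangle inequality using $\|u^h - r_h u\|_{2,\alpha,\Omega_0^h} = O(h^2)$ transfers this bound to $\Delta_d u^h \geq 0$ for $h$ small. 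The main technical hurdle is the discrete Schauder estimate invoked twice above for the variable-coefficient operator $\div_h(\{S_k^{ij}(\mathcal{H}_d u^{0,h})\} D_h \cdot)$: Theorem \ref{SchauderPoisson-D} is stated only for $\Delta_d$, and extending it with $h$-independent constants to operators with $C^{0,\alpha}$ coefficients and uniform ellipticity is the step requiring care — either by freezing coefficients and using a Neumann-series perturbation off the constant-coefficient case or by invoking directly the general variable-coefficient discrete Schauder machinery of Thomée and collaborators.
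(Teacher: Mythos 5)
Your proposal reproduces the paper's proof essentially verbatim: define the same fixed-point map $R^h$, verify the two hypotheses of Lemma \ref{sum-lem} (strict contraction via the mean-value argument mirroring \eqref{contraction-cont-level}, and $O(h^2)$ consistency via Lemma \ref{est-d2} and discrete Schauder estimates), and then read off $\Delta_d u^h\geq 0$ from Lemma \ref{lboundDelta}. The concern you raise at the end is legitimate but is also present in the paper itself, which invokes Theorem \ref{SchauderPoisson-D} (stated only for $\Delta_d$) for the variable-coefficient operator $\div_h(\{S_k^{ij}(\mathcal{H}_d u^{0,h})\}D_h\,\cdot\,)$ without comment; the intended justification is that Thom\'ee's result cited in Theorem \ref{discShauderPoisson} in fact covers general uniformly elliptic difference operators with H\"older coefficients, exactly the ``variable-coefficient discrete Schauder machinery'' you propose invoking.
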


\begin{proof}
It follows from the assumptions that inf $f >0$. We define the operator
$R^h: \mathcal{M}(\Omega^h) \to \mathcal{M}(\Omega^h)$ by
\begin{align*} 
-  \{S_k^{ij}(\mathcal{H}_d \, u^{0,h}) \} : \mathcal{H}_d  (v^h-R^h v^h) &= - S_k (\mathcal{H}_d \, v^h) + f  \, \text{in} \, \Omega^h_0 \\
R^h(v^h) & = g  \, \text{on} \, \partial \Omega^h,
\end{align*}
and show that $R^h$ has a unique fixed point in $B_{\rho}( u)$ for $\rho=O(h^{2})$. By Remark \ref{u0h-rem} the above problem is then well defined. 
It follows from \eqref{div-prop} that the operator $R^h$ is a discrete version of the operator $R$ used in the proof of Theorem \ref{contT-broyden}. Thus, as in the proof of Theorem \ref{contT-broyden} 
we obtain 
\begin{align*}
  \{S_k^{ij}(\mathcal{H}_d \, u^{0,h}) \} : \mathcal{H}_d  (R^h v^h - R^h w^h) & =
S_k (\mathcal{H}_d \, w^h) - S_k (\mathcal{H}_d \, v^h) \\
& \qquad + \{S_k^{ij}(\mathcal{H}_d \, u^{0,h}) \}:\mathcal{H}_d \,(v^h-w^h). 
\end{align*}
And thus by the mean value theorem and discrete Schauder estimates, as in the proof of Theorem \ref{contT-broyden}
\begin{multline} \label{strict-step-01}
||R^h(v^h) - R^h(w^h)||_{2,\alpha;\Omega_0^h}   \leq \\ C (\rho+\delta_h+||u^{0,h}||_{2,\alpha;\Omega_0^h})^{k-2} (\rho+\delta_h)||v^h-w^h||_{2,\alpha;\Omega_0^h}.
\end{multline}
Next, note that with \eqref{consistent2} applied to $u$ one has $|u|_{2,\alpha;\Omega_0^h} \leq C ||u||_{4,\alpha;\Omega}$. 
It follows that $||u^{0,h}||_{2,\alpha;\Omega_0^h}\leq ||u^{}||_{2,\alpha;\Omega_0^h} + \delta_h \leq C ||u||_{4,\alpha;\Omega} +  \delta_h$. We recall that by assumption 
$||u^{0,h}- u||_{2,\alpha;\Omega_0^h} = O(h^2)$. 
Thus
$R^h$ is a strict contraction in $B_{\rho}( u)$ for $\rho=$O$(h^{2})$ and $h$ sufficiently small. 
Moreover, the contraction factor can be made smaller than 1/2 by choosing $h$ sufficiently small. 

Since $f=S_k(D^2 u)$, by the discrete Schauder estimates Theorem \ref{discShauderPoisson} and Lemma \ref{est-d2} 
\begin{align*}
||R^h( u) -  u||_{2,\alpha;\Omega_0^h} \leq C  ||S_k (D^2 u)-S_k (\mathcal{H}_d( u)) ||_{0,\alpha;\Omega_0^h} \leq C h^2. 
\end{align*}
By Lemma \ref{sum-lem} we conclude that $R^h$ has a fixed point $u^h$ in $B_{\rho}( u)$ with the claimed convergence rate.


The claimed property of $u^h$ follows from the fact that $u^h \in B_{\rho}( u)$ and Lemma \ref{lboundDelta}.

\end{proof}

\section{Newton's method} \label{newton-sec}

As in the previous section, we assume that $ \{S_k^{ij}(D^2 u) \}$ is uniformly positive definite. By Remark \ref{u0h-rem}, 
for $h$ sufficiently small, there exists $m' >0$ such that for $v^h \in B_{\rho}( u)$, $ \{S_k^{ij}(\mathcal{H}_d v^h) \}$ has smallest eigenvalue greater than $m'$.  We consider for $u^{0,h} \in B_{\rho}( u)$ the sequence of iterates
\begin{align} \label{newton}
\begin{split}
\{S_k^{ij}(\mathcal{H}_d u^{m,h}) \}: (\mathcal{H}_d u^{m+1,h} - \mathcal{H}_d u^{m,h}) & = f- S_k(\mathcal{H}_d u^{m,h}) \ \text{in} \ \Omega_0^h \\
 u^{m+1,h} & = g  \ \text{in} \ \partial \Omega^h.
\end{split}
\end{align} 
We note that \eqref{newton} defines $u^{m+1,h}$ as the solution of a discrete second order elliptic equation in non divergence form, which is uniformly elliptic for $u^{m,h} \in B_{\rho}( u)$ for $h$ sufficiently small. 

\begin{thm} \label{newton-th}
The sequence defined by \eqref{newton} satisfies 
\begin{equation} \label{newton-quad}
||u^{m+1,h} - u^h||_{2,\alpha;\Omega_0^h} \leq C ||u^{m,h} - u^h||_{2,\alpha;\Omega_0^h}^2,
\end{equation}
for $\rho$ and $h$ sufficiently small and where $u^h$ denotes the solution of \eqref{k-H1h} in $B_{\rho}( u), \rho=O(h^{2})$.  
\end{thm}

\begin{proof}
Put
\begin{equation} \label{B0}
B = \{S_k^{ij}(\mathcal{H}_d u^{m,h}) \}: (\mathcal{H}_d u^{m+1,h} - \mathcal{H}_d u^{h}) .
\end{equation}
We have by \eqref{k-H1h}
\begin{align} \label{B}
\begin{split}
B & = \{S_k^{ij}(\mathcal{H}_d u^{m,h}) \}: (\mathcal{H}_d u^{m,h} - \mathcal{H}_d u^{h}) +S_k (\mathcal{H}_d u^{h} )- S_k(\mathcal{H}_d u^{m,h}) \\
 & = \bigg( \{S_k^{ij}(\mathcal{H}_d u^{m,h}) \} - \{S_k^{ij}(\mathcal{H}_d u^{h} ) \} \bigg):  (\mathcal{H}_d u^{m,h} - \mathcal{H}_d u^{h})   \\
& \quad \quad + \{S_k^{ij}(\mathcal{H}_d u^{h}) \}: (\mathcal{H}_d u^{m,h} - \mathcal{H}_d u^{h}) +S_k (\mathcal{H}_d u^{h} )- S_k(\mathcal{H}_d u^{m,h}).
\end{split}
\end{align}
Put
\begin{equation} \label{B1}
B_1 = \bigg( \{S_k^{ij}(\mathcal{H}_d u^{m,h}) \} - \{S_k^{ij}(\mathcal{H}_d u^{h} ) \} \bigg):  (\mathcal{H}_d u^{m,h} - \mathcal{H}_d u^{h}),
\end{equation}
and
\begin{equation} \label{B2}
B_2 =  \{S_k^{ij}(\mathcal{H}_d u^{h}) \}: (\mathcal{H}_d u^{m,h} - \mathcal{H}_d u^{h}) +S_k (\mathcal{H}_d u^{h} )- S_k(\mathcal{H}_d u^{m,h}).
\end{equation}
By the mean value theorem, \eqref{k-Hdiv0} and \eqref{cof-estimate}, we have
$$
B_1 =\big( \{ S_k^{ij}(t \mathcal{H}_d u^{m,h} + (1-t) \mathcal{H}_d u^{h} ) \}'(\mathcal{H}_d u^{m,h} - \mathcal{H}_d u^{h}) \big):  (\mathcal{H}_d u^{m,h} - \mathcal{H}_d u^{h}),
$$
for $t \in [0,1]$ and thus
\begin{align} \label{B1-est}
\begin{split}
||B_1||_{0,\alpha;\Omega_0^h} & \leq C (||u^h||_{2,\alpha;\Omega_0^h} + ||u^{m,h}||_{2,\alpha;\Omega_0^h} )^{k-2} ||u^{m,h} - u^h||_{2,\alpha;\Omega_0^h}^2 \\
& \leq C (|| u||_{2,\alpha;\Omega_0^h} +\rho )^{k-2} ||u^{m,h} - u^h||_{2,\alpha;\Omega_0^h}^2 \\
& \leq C (|| u||_{2,\alpha;\Omega_0^h} +\rho )^{k-2} ||u^{m,h} - u^h||_{2,\alpha;\Omega_0^h}^2.
\end{split}
\end{align}
We also have by the mean value theorem 
\begin{align}  \label{B2-est}
\begin{split}
B_2 & = \{S_k^{ij}(\mathcal{H}_d u^{h}) \}: (\mathcal{H}_d u^{m,h} - \mathcal{H}_d u^{h}) \\
 & \qquad + \{S_k^{ij}( t  \mathcal{H}_d u^{h} + (1-t) \mathcal{H}_d u^{m,h}) \}: (\mathcal{H}_d u^{h} - \mathcal{H}_d u^{m,h})\\
 & = \bigg(\{S_k^{ij}(\mathcal{H}_d u^{h}) \} - \{S_k^{ij}( t  \mathcal{H}_d u^{h} + (1-t) \mathcal{H}_d u^{m,h}) \} \bigg) : (\mathcal{H}_d u^{m,h} - \mathcal{H}_d u^{h}) \\
 & =  \bigg( \{S_k^{ij}( (1-s)  \mathcal{H}_d u^{h}  +s t  \mathcal{H}_d u^{h} +  s(1-t) \mathcal{H}_d u^{m,h}) \}'  \\
 & \qquad \qquad \qquad \big((1-t)(\mathcal{H}_d u^{h} -  \mathcal{H}_d u^{m,h})\big) \bigg) : (\mathcal{H}_d u^{m,h} - \mathcal{H}_d u^{h}),
\end{split}
\end{align}
for $s,t \in [0,1]$. As for $B_1$ we obtain
\begin{align}
\begin{split}
||B_2||_{0,\alpha;\Omega_0^h} & \leq C (|| u||_{2,\alpha;\Omega_0^h} +\rho )^{k-2} ||u^{m,h} - u^h||_{2,\alpha;\Omega_0^h}^2.
\end{split}
\end{align}
Combining \eqref{B0}--\eqref{B2-est} 
and using Schauder estimates, we obtain \eqref{newton-quad}. 
\end{proof}

Choosing $\rho=O(h^{2})$ we have $C \rho < 1$ for $h$ sufficiently small. We conclude that $u^{m+1,h} \in B_{\rho}( u)$ when $u^{m,h} \in B_{\rho}( u)$ and the quadratic convergence rate of Newton's method.
\begin{rem}

Having established that the discrete problem has a locally unique solution and that $v^h$ is a discrete convex function for $v^h$ sufficiently close to $u$, the convergence of Newton's method also follows from the verification of standard assumptions given in \cite[p. 68]{Kelley95}. See \cite{Oberman2010a} for an example of verification of the standard assumptions for a wide stencil discretization.

\end{rem}

\section{Gauss-Seidel iterative methods} \label{convexity}

It is a natural idea to solve \eqref{k-H1h} by a nonlinear Gauss-Seidel method, that is solve \eqref{k-H1h} for $u^h(x)$ and solve the resulting nonlinear equations by a Gauss-Seidel method. Although this seems a daunting task for arbitrary $k$, we show that for $k=2$, this takes a very elegant form. We then establish a connection between  the resulting nonlinear Gauss-Seidel iterative method for $2$-Hessian equations and the discrete version of \eqref{k-H-iterative}, i.e.
\begin{align} \label{k-H1-iterativeD}
\begin{split}
\Delta_d \, u^{m+1,h}  & = \bigg( (\Delta_d \, u^{m,h})^k + \frac{1}{c(k,n)}( f-S_k (\mathcal{H}_d \, u^{m,h} ) )\bigg)^{\frac{1}{k}}  \ \text{in} \ \Omega_0^h \\
u^{m+1,h} & =  g \, \text{on} \, \partial \Omega^h,
\end{split}
\end{align}
when the Gauss-Seidel method is used to solve the Poisson equations. 

\subsection{Nonlinear Gauss-Seidel method for 2-Hessian equations}

We start with the identity
\begin{align} \label{identity}
\Delta_d \, u^{h}  = \bigg( (\Delta_d \, u^{h})^2 + \frac{1}{c(2,n)}(f-S_2 (\mathcal{H}_d \, u^{h} ) ) \bigg)^{\frac{1}{2}},
\end{align}
and show that the right hand side is independent of $u^h(x)$. Note that by \eqref{second-disc2}, $\partial^i_h \partial^j_h u^h(x), i \neq j$ is independent of $u^h(x)$ and 
by \eqref{second-disc3},
$$
\frac{\partial (\Delta_d \, u^{h}(x))}{\partial (u^h(x))} = \sum_{i=1}^n -\frac{2}{h^2} = -\frac{2 n}{h^2}.
$$
Since $\partial S_k(A)/\partial z = \sum_{i,j=1}^n( \partial S_k(A)/\partial a_{ij}) ( \partial a_{ij}/\partial z)$,
we conclude that
\begin{align*}
\frac{\partial }{\partial (u^h(x))} S_2 (\mathcal{H}_d \, u^{h}(x) ) & = \sum_{i,j=1 \atop i \neq j}^n  S_2^{ij}(\mathcal{H}_d \, u^{h}(x)) \frac{\partial}{\partial (u^h(x))} \partial^i_h \partial^j_h u^h(x) \\
& \qquad + \sum_{i=1}^n  S_2^{i i}(\mathcal{H}_d \, u^{h}(x)) \frac{\partial}{\partial (u^h(x))} \partial^i_+ \partial^i_- u^h(x) \\
& = -\frac{2}{h^2} \sum_{i=1}^n S_2^{ii}(\mathcal{H}_d \, u^{h}(x)) = -\frac{2}{h^2} \sum_{i=1}^n \sum_{1 \leq  p \leq n \atop p \neq i} \delta_{i p}^{i p} \,
\partial^p_+ \partial^p_- u^h(x)\\
& = -\frac{2}{h^2} \sum_{i=1}^n \sum_{p \neq i}  \partial^p_+ \partial^p_- u^h(x)=  -\frac{2}{h^2} (n-1) \Delta_d \, u^{h}(x)\\
& = -\frac{2}{h^2} (2 n) \,  c(2,n) \Delta_d \, u^{h}(x) = -\frac{4 n}{h^2} c(2,n) \Delta_d \, u^{h}(x),
\end{align*}
and we recall that the definition of $\delta_{i p}^{i p}$ was given in section \ref{notation1}.
This gives
$$
\frac{\partial }{\partial (u^h(x))} \bigg( (\Delta_d \, u^{h}(x))^2 + \frac{1}{c(2,n)}(f-S_2 (\mathcal{H}_d \, u^{h}(x) )) \bigg) = 0.
$$
We can therefore rewrite \eqref{identity} as 
\begin{align} \label{identity2}
\begin{split}
u^h(x) & = \frac{h^2}{2 n} \bigg[\sum_{i=1}^n \frac{u^h(x+he^i) + u^h(x-h e^i)}{h^2} \\
& \qquad \qquad \qquad \quad - \bigg( (\Delta_d \, u^{h}(x))^2 + \frac{1}{c(2,n)}(f-S_2 (\mathcal{H}_d \, u^{h}(x) ) \bigg)^{\frac{1}{2}} \bigg],
\end{split}
\end{align}
where the solution with $\Delta_d \, u^{h} \geq 0$ has been selected. For $n=2$, this is the identity which was solved in \cite{Headrick05,Chen2010b,Chen2010c,Benamou2010} by a Gauss-Seidel iterative method, as indicated in the introduction. For $n \geq 3$, this provides new iterative methods for the $2$-Hessian equations. 


Henceforth, we shall assume that a row ordering of the elements of $\Omega^h$ is chosen. Note that if we apply the Gauss-Seidel method to the problem \eqref{k-H1-iterativeD}, we obtain a double sequence $u^{m,p,h}$ defined by
\begin{align*} 
\begin{split}
u^{m+1,p+1,h}(x) & = \frac{h^2}{2 n} \bigg[\sum_{i=1}^n \frac{u^{m+1,p,h}(x+he^i) + u^{m+1,p+1,h}(x-h e^i)}{h^2} \\
& \qquad \qquad \qquad \quad - \bigg( (\Delta_d \, u^{m,h}(x))^2 + \frac{1}{c(2,n)}(f-S_2 (\mathcal{H}_d \, u^{m,h}(x) ) \bigg)^{\frac{1}{2}} \bigg],
\end{split}
\end{align*}
This leads us to consider the double sequence $u^{m,h}_p$ defined by
\begin{align*} 
\begin{split}
u^{m+1,h}_{p+1}(x) & = \frac{h^2}{2 n} \bigg[\sum_{i=1}^n \frac{u^{m+1,h}_p(x+he^i) + u^{m+1,h}_{p+1}(x-h e^i)}{h^2} \\
& \qquad \qquad \qquad \quad - \bigg( (\Delta_d \, u^{m,h}_{p*}(x))^2 + \frac{1}{c(2,n)}(f-S_2 (\mathcal{H}_d \, u^{m,h}_{p*}(x) ) \bigg)^{\frac{1}{2}} \bigg],
\end{split}
\end{align*} 
where $\Delta_d \, u^{m,h}_{p*}(x)$ and $S_2 (\mathcal{H}_d \, u^{m,h}_{p*}(x) )$ are the actions of the discrete Laplace and $2$-Hessian operators on $u^{m,h}_p$ updated with the most recently computed values.

Formally, as $m \to \infty$, this gives the nonlinear Gauss-Seidel method 
\begin{align} \label{Gauss} 
\begin{split}
u^{h}_{p+1}(x) & = \frac{h^2}{2 n} \bigg[\sum_{i=1}^n \frac{u^{h}_p(x+he^i) + u^{h}_{p+1}(x-h e^i)}{h^2} \\
& \qquad \qquad \qquad \quad - \bigg( (\Delta_d \, u^{h}_{p*}(x))^2 + \frac{1}{c(2,n)}(f-S_2 (\mathcal{H}_d \, u^{h}_{p*}(x) ) \bigg)^{\frac{1}{2}} \bigg],
\end{split}
\end{align}
where as above $\Delta_d \, u^{h}_{p*}(x)$ and $S_2 (\mathcal{H}_d \, u^{h}_{p*}(x) )$ are the actions of the discrete Laplace and $2$-Hessian operators on $u^{h}_p$ updated with the most recently computed values of $u^{h}_{p+1}$.
In particular, the right hand side of \eqref{Gauss} does not depend on $u^h_{p+1}$ since as shown above, the right hand side of \eqref{identity2} does not depend on $u^h(x)$.

\section{Numerical results} \label{num}
We give numerical results for the $\sigma_2$ problem, i.e. for $k=2, n=3$ using the subharmonicity preserving iterations. 
Although our theoretical results only cover smooth solutions, as indicated in the abstract and in the introduction, the subharmonicity preserving iterations appear able to handle non smooth solutions. The initial guess in all of our numerical experiments is taken as the finite difference approximation of the solution of the Poisson equation $\Delta u = 2 \sqrt{f}$ in $\Omega$ with $u=g$ on $\partial \Omega$.

We use the following test functions on the unit cube $[0,1]^3$:

Test 1: A smooth solution which is strictly convex, $u(x,y,z)=e^{x^2+y^2+z^2}$ so that 
$f(x,y,z)=4(3+x^2+y^2+z^2)e^{2(x^2+y^2+z^2)}$ and $g(x,y,z)=e^{x^2+y^2+z^2}$ on $\partial \Omega$.

\bigskip

Test 2: A smooth solution which is $2$-convex but not convex. It is known that for a radial function $u(x)=\phi(r), r=|x|, x \in \R^n$ the eigenvalues of $D^2 u$ are given by
$\lambda_1=\phi''(r)$ with multiplicity 1 and $\lambda_2=\phi'(r)/r$ with multiplicity $n-1$. See for example \cite[Lemma 2.1]{Felmer2003}. It follows that with $u(x,y,z)=\ln(a+x^2+y^2+z^2)$, we have $\phi(r)= \ln(a+r^2)$ and we get 
$\Delta u =  \frac{6 a + 2 r^2}{ (a+r^2)^2}  \geq 0, \, S_2(D^2 u)= 4\frac{3 a-r^2}{ (a+r^2)^3} \geq 0, \,  \det D^2 u = 2\frac{a-r^2} {(a+r^2)^2},$
in $[0,1]^3$. With $a=2$, $\det D^2 u$ takes negative values in $[0,1]^3$.

\bigskip

Test 3: A solution not in $H^2(\Omega)$, $u(x,y,z)=-\sqrt{3-x^2-y^2-z^2}$ 
so that $f(x,y,z)=-(x^2+y^2+z^2-9)/(-3+x^2+y^2+z^2)^2$ and $g(x,y,z)=-\sqrt{3-x^2-y^2-z^2}$ on $\partial \Omega$.

\bigskip

Test 4: No exact solution is known.  Here
$f(x,y,z)=1$ and $g(x,y,z)=0$. 

\bigskip

Test 5: A degenerate three dimensional Monge-Amp\`ere equation. We take $f(x,y,z)=0$ and $g(x,y,z)=|x-1/2|$. 
We use the double iterative method based on \eqref{sigma2k}. 

Numerically, the solution computed may not satisfy $S^2 D^2 u^{m} \geq 0$. At those points we set both $S_2 (D^2 u^{m})$ and $\det D^2 u^m$ to 0 in 
\eqref{sigma2k}. If the numerical value of $S_2(D^2 u^{m})$ is negative, then 0 is a better approximate value. Since $S_2( D^2 u^{m})$ is computed from $u^m$, the numerical value of $\det D^2 u^m$ would also be inaccurate. Since $u^m$ is expected to be an approximate solution of $u$ for which $\det D^2 u \geq 0$, a better approximation of $\det D^2 u^m$ at any stage where the latter is negative is also 0. It would be interesting to analyze the effect of these rounding off errors on the overall numerical convergence of the method. For example, one may analyze the convergence of the inexact double iteration. Similar situations appear with inexact Newton's methods and inexact Uzawa algorithms. 

The right hand side $f(x,y,z)$ can be computed from the exact solution $u(x,y,z)$ using the definition of $S_2(D^2 u)$ as the sum of the $2 \times 2$ principal minors.

For all tests but Test 3, we used the direct solver \eqref{k-H1-iterativeD}. For $h=2^6$, we run out of memory with \eqref{k-H1-iterativeD}. For Test 3, the Gauss-Seidel method was used since there is no memory issue for the latter with $h=2^6$. As expected, we have quadratic convergence (as $h \to 0$) for the smooth solutions of Tests 1 and 2 while enough data is not available to give the convergence rate for the singular solution of Test 3.

\begin{table} 
\begin{tabular}{c|ccccc}  
 \multicolumn{6}{c}{$h$}\\
 &    $1/2^1$ &  $1/2^2$ &  $1/2^3$&  $1/2^4$ &  $1/2^5$ \\
Error & 6.2328 $10^{-2}$ & 2.6556 $10^{-2}$ & 7.7836 $10^{-3}$ & 2.0616 $10^{-3}$ & 5.2449 $10^{-4}$  \\
Rate & &1.23 & 1.77&1.92 &1.97 
\end{tabular} 
\bigskip
\caption{Maximum error with Test 1.}
\end{table}

\begin{table} 
\begin{tabular}{c|ccccc}  
 \multicolumn{6}{c}{$h$}\\
 &    $1/2^1$ &  $1/2^2$ &  $1/2^3$&  $1/2^4$ &  $1/2^5$ \\
Error & 6.5241 $10^{-4}$ & 5.0653 $10^{-4}$ & 1.3850 $10^{-4}$ & 3.5587 $10^{-5}$ & 9.1276 $10^{-6}$  \\
Rate & &0.36 & 1.87&1.96 &1.96 
\end{tabular} 
\bigskip
\caption{Maximum error with Test 2.}
\end{table}

\begin{table} 
\begin{tabular}{c|ccc}  
 \multicolumn{4}{c}{$h$}\\
 &    $1/2^4$ &  $1/2^5$ &  $1/2^6$ \\
Error & 1.1084 $10^{-3}$ & 9.7971 $10^{-4}$ & 7.6618 $10^{-4}$ \\&& &  \\
Rate & &0.18 &0.35 \\
\end{tabular} 
\bigskip
\caption{Maximum error with Test 3.}
\end{table}

\begin{figure}[tbp]
\begin{center}
\includegraphics[angle=0, height=4.5cm]{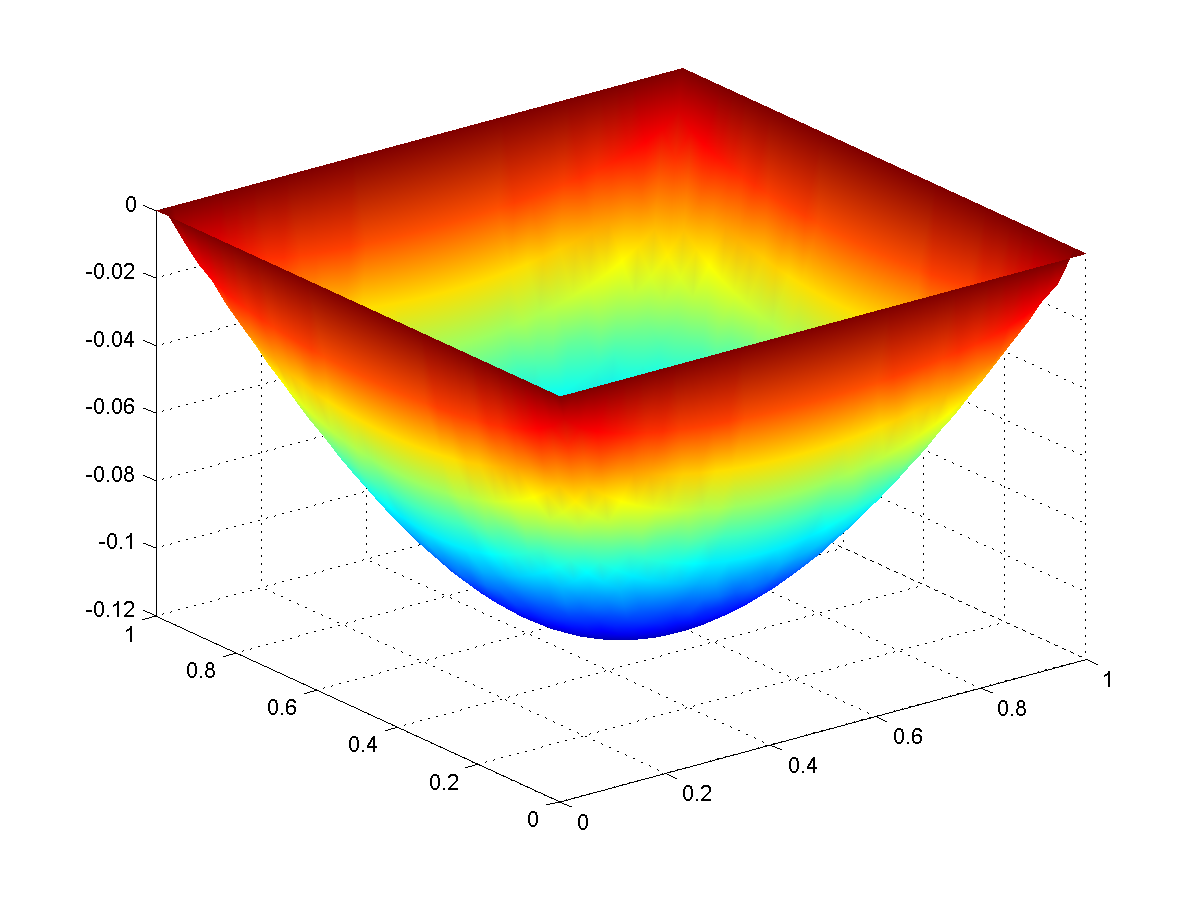}
\includegraphics[angle=0, height=5cm]{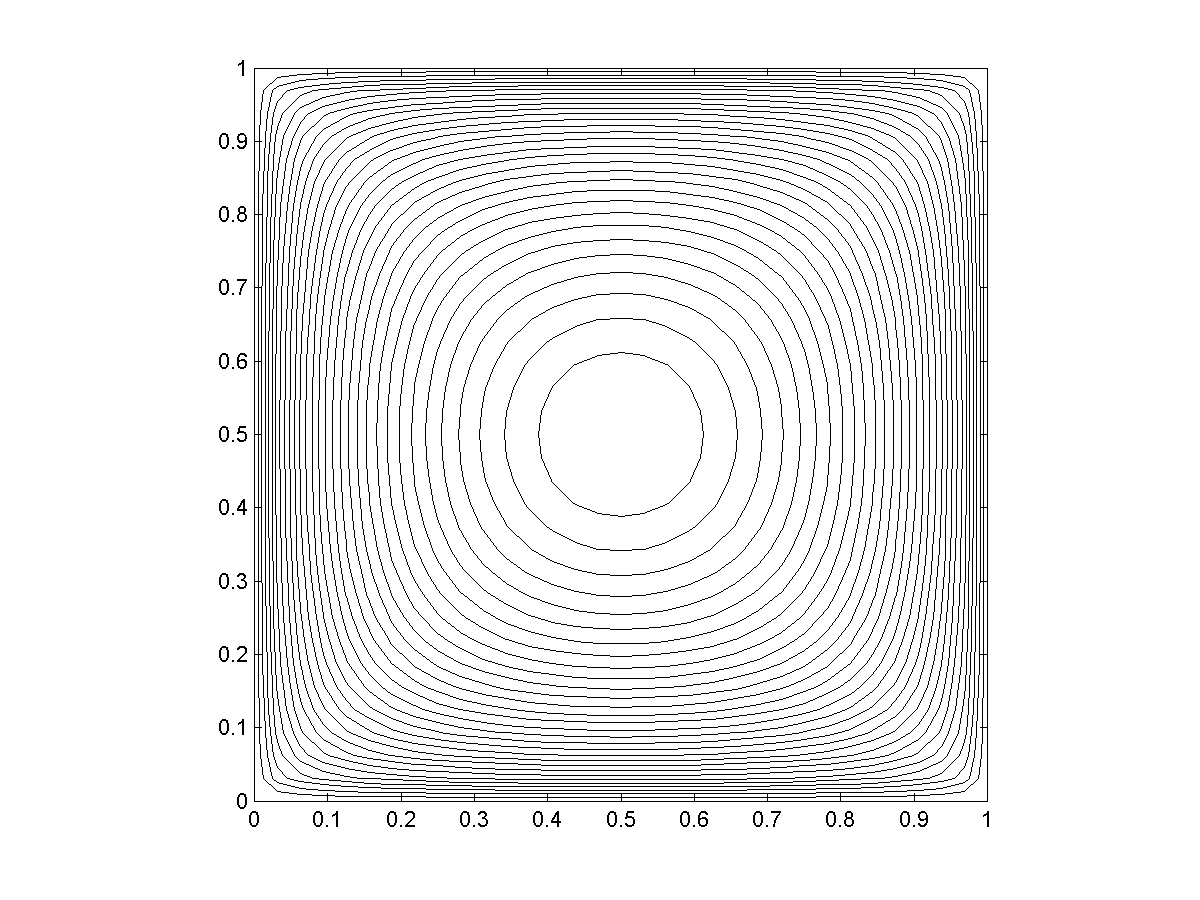}
\end{center}
\caption{Test 4, $h=1/2^5$. Graph and contour in plane $z=1/2$.}
\end{figure}

\begin{figure}[tbp]
\begin{center}
\includegraphics[angle=0, height=4.5cm]{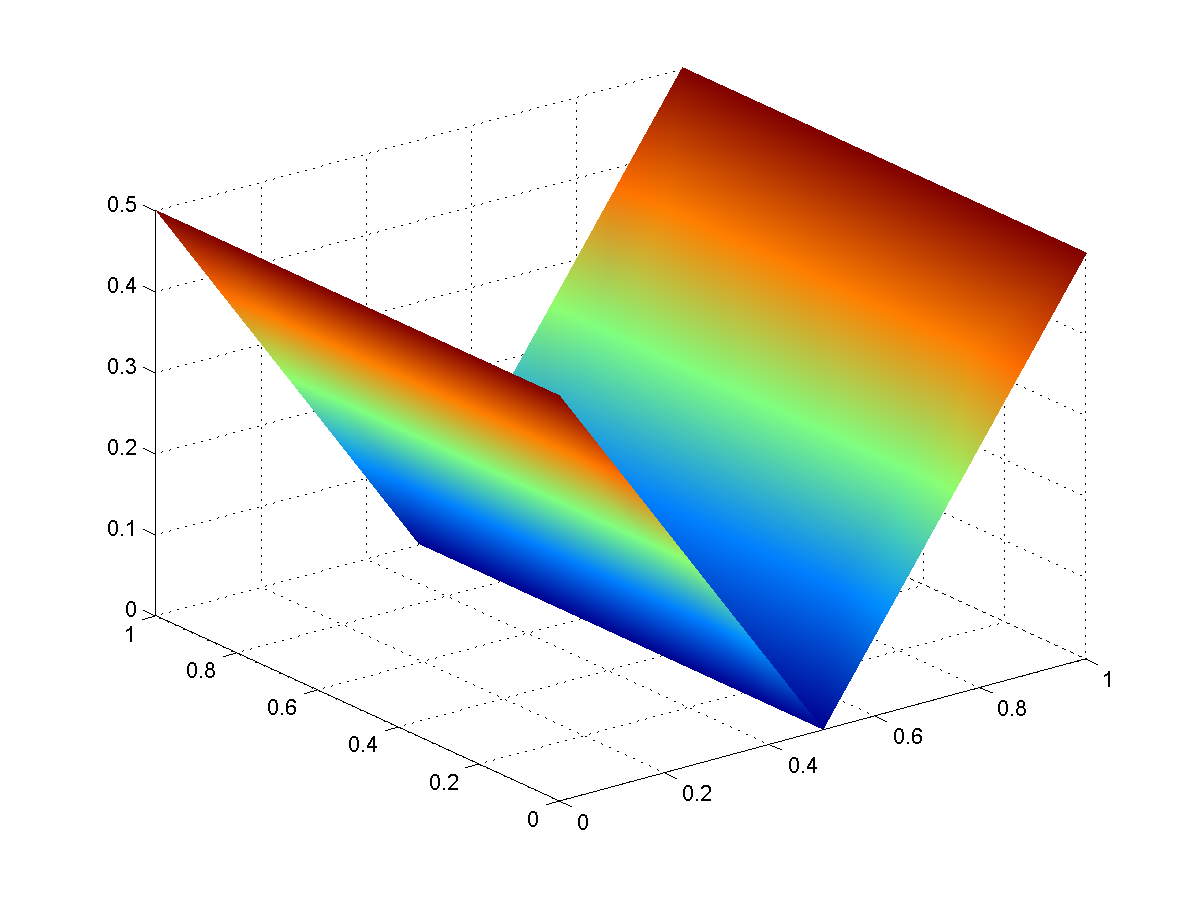}
\end{center}
\caption{Test 5, $h=1/2^4$. Graph in the plane $z=1/2$.}
\end{figure}

In 
\cite{Benamou2010}, it was argued based on numerical evidence that the Gauss-Seidel method \eqref{Gauss} is faster than a certain variant of the direct solver \eqref{k-H1-iterativeD} for singular solutions. In our implementation we saw evidence of the contrary, that is, the Gauss-Seidel method is less efficient. We note that the Gauss-Seidel method requires much more loops which are not efficient in MATLAB. 

\section{Concluding Remarks}

\begin{rem}
Although the pseudo-transient and time marching methods introduced in \cite{AwanouPseudo10} work as well for $k$-Hessian equations, and apply to more general fully nonlinear equations, the subharmonicity preserving iterative methods introduced in this paper are parameter free. 
All these type of methods can be accelerated with fast Poisson solvers and multigrid methods.
\end{rem}

\begin{rem}
When it comes to numerical methods for fully nonlinear equations, there are two types of convergence to study. Since the equations are nonlinear, they must be solved iteratively. One must then address the convergence to the discrete solution of the iterative methods used. The second type of convergence is the convergence of the numerical solution to the exact solution as the discretization parameter converges to 0. 
We have addressed both types of convergence in this paper. 
\end{rem}

\begin{rem}
Existence of a discrete solution and convergence (as the mesh size $h \to 0$), for finite difference discretization of smooth solutions of fully nonlinear equations, are not often discussed. 
It is clear that convergence does not simply follow from the consistency of standard finite difference discretization of the second order derivatives. For viscosity solutions, convergence of monotone, stable and consistent schemes follows immediately from the theory of Barles and Souganidis.
\end{rem}

\begin{rem}
The iterative method \eqref{k-H-iterative}  can be viewed as a linearization of the fully nonlinear equation \eqref{k-H1}. It is possible to linearize \eqref{k-H1} in ways different from \eqref{broyden} and \eqref{k-H-iterative}. See for example the methods described in \cite{AwanouPseudo10}. The iterative method \eqref{k-H-iterative} has been shown numerically to select discrete solutions which converge to non smooth solutions. Since \eqref{k-H-iterative} consists of a sequence of Poisson equations, the numerical solution of \eqref{k-H1} can now be tackled with any good numerical method. 
\end{rem}

{\bf Acknowledgments.}
The author would like to thank the referees for a careful reading of the manuscript. The author is grateful to M. Neilan for many useful discussions. The author was supported in part by NSF grants DMS-0811052, DMS-1319640 and the Sloan Foundation.

\end{document}